\documentclass[11pt]{amsart}
\usepackage{amsfonts,amssymb,amscd,amsmath,verbatim,calc,graphicx}

\usepackage[margin=2.4cm]{geometry}
\usepackage[usenames,dvipsnames]{color}
\usepackage[shortlabels]{enumitem}
\usepackage{tikz, booktabs, colortbl, here}

%
%
\def\NZQ{\mathbb}               
  \def\ZZ{{\NZQ Z}}   
%
%
    
%
\def\opn#1#2{\def#1{\operatorname{#2}}} 
\opn\GL{GL} \opn\auto{Aut} \opn\inn{Inn} \opn\fix{Fix} \opn\ord{ord} \opn\id{id}
   \def\Qc{{\mathcal Q}}
\newcommand{\Sf}{\mathfrak{S}}
\newcommand{\Af}{\mathfrak{A}}
%
\newtheorem{thm}{Theorem}[section]
\newtheorem{lem}[thm]{Lemma}
\newtheorem{prop}[thm]{Proposition}

\newtheorem{q}[thm]{Question}
\newtheorem{prob}[thm]{Problem}
\theoremstyle{definition}
\newtheorem{defi}[thm]{Definition}

\newtheorem*{acknowledgement}{Acknowledgments}
\theoremstyle{remark}
\newtheorem{rem}[thm]{Remark}
%
%
%
\textwidth=15.5cm \textheight=21cm \topmargin=0.5cm \oddsidemargin=0.5cm \evensidemargin=0.5cm 
\pagestyle{plain}
%

\opn\proj{Proj}
\def\Ic{{\mathcal I}}
\def\Oc{{\mathcal O}}


\begin{document}

\title{Homogeneous quandles arising from automorphisms of symmetric groups}

\author{Akihiro Higashitani}
\author{Hirotake Kurihara}
\thanks{
{\bf 2010 Mathematics Subject Classification:} Primary; 20N02, Secondary: 20B05, 53C35. \\
\;\;\;\; {\bf Keywords:} 
homogeneous quandles, automorphisms, quandle triplets, symmetric groups.}
\address{Akihiro Higashitani, Department of Pure and Applied Mathematics, Graduate School of Information Science and Technology, Osaka University, 
Yamadaoka 1-5, Suita, Osaka 565-0871, Japan}
\email{higashitani@ist.osaka-u.ac.jp}
\address{Hirotake Kurihara, Department of Integrated Arts and Science Kitakyushu National College of Technology, 
5-20-1 Shii, Kokuraminami-ku, Kitakyushu, Fukuoka 802-0985, Japan}
\email{kurihara@kct.ac.jp}

\begin{abstract}
Quandle is an algebraic system with one binary operation, but it is quite different from a group. 
Quandle has its origin in the knot theory and good relationships with the theory of symmetric spaces, 
so it is well-studied from points of view of both areas. 
In the present paper, we investigate a special kind of quandles, called generalized Alexander quandles $Q(G,\psi)$, 
which is defined by a group $G$ together with its group automorphism $\psi$. 
We develop the quandle invariants for generalized Alexander quandles. 
As a result, we prove that there is a one-to-one correspondence between generalized Alexander quandles arising from symmetric groups $\Sf_n$ 
and the conjugacy classes of $\Sf_n$ for $3 \leq n \leq 30$ with $n \neq 6,15$, and the case $n=6$ is also discussed. 
\end{abstract}

\maketitle

\section{Introduction}

Originally, quandles were introduced by Joyce \cite{Joyce} in the context of the knot theory. 
We call a set $Q$ a {\em quandle} if $Q$ is equipped with a binary operation $*$ satisfying the following three axioms: 
\begin{itemize}
\item[(Q1)] $x * x = x$ for any $x \in Q$; 
\item[(Q2)] for any $x,y \in Q$, there exists a unique $z \in Q$ such that $z * y = x$; 
\item[(Q3)] for any $x,y,z \in Q$, we have $(x*y)*z=(x*z)*(y*z)$. 
\end{itemize}
These axioms are derived from the Reidemeister moves appearing in the knot theory. 
We can rephrase this definition of quandles in terms of ``globally defined maps'' as follows. 
We call a set $Q$ a quandle if $Q$ is equipped with a ``point symmetry'' $s_x : Q \rightarrow Q$ defined in each $x \in Q$ satisfying the following three axioms: 
\begin{itemize}
\item[(Q1')] $s_x(x)=x$ for any $x \in Q$; 
\item[(Q2')] $s_x$ is a bijection on $Q$ for any $x \in Q$; 
\item[(Q3')] $s_x \circ s_y = s_{s_x(y)} \circ s_x$ holds for any $x,y\in Q$. 
\end{itemize}
It is straightforward to check that those definitions are equivalent by setting $s_x(\cdot) = (\cdot) * x$ for each $x \in Q$. 
We can see from this definition that every symmetric space has a quandle structure (cf.~\cite{Joyce}), 
where a {\em symmetric space} is a (Riemannian) manifold $M$ equipped with a globally defined map $s_x : M \rightarrow M$ for any $x \in M$ 
satisfying certain conditions, which are similar to (Q1')--(Q3'). 
Hence, quandles can be regarded as a ``discrete version'' of symmetric spaces. 
Quandles have been studied from this point of view, i.e., there are many results on quandles arising from the theory of symmetric spaces. 
Among those studies, a notion of \textit{homogeneous quandles} appears (see Section~\ref{sec:homogeneous}) and those are well-studied. (See, e.g., \cite{IT, KTW, Ven}, and so on.)

Although quandles have good relationships with the knot theory and the theory of symmetric spaces, quandles might be still mysterious when we treat them as a set equipped with one binary operation. 
Hence, it is quite natural to compare quandles with a very well-studied algebraic system, groups. 
On the other hand, we can construct a homogeneous quandle from a group together with its group automorphism. Such quandle is called a \textit{generalized Alexander quandle} (see Subsection~\ref{subsec:genAlex}). 
Our motivation to organize this paper is to find some connections with groups and homogeneous quandles arising from groups. 
More precisely, what we would like to do is to give a certain solution for Problem~\ref{toi}. 
In the present paper, for this purpose, we develop several quandle invariants of generalized Alexander quandles (see Section~\ref{sec:inv}). 
Moreover, as the first step, we investigate Problem~\ref{toi} in the case of symmetric groups. 
As a consequence, we obtain that generalized Alexander quandles arising from symmetric groups $\Sf_n$ one-to-one correspond to conjugacy classes of automorphism groups of $\Sf_n$ 
if $n \in \{3,4,\ldots,30\} \setminus \{15\}$ (Theorem~\ref{thm:sym}). 
Finally, we suggest the problem whether this is always true for any $n$ (Question~\ref{yosou}). 

The present paper is organized as follows. In Section~\ref{sec:notation}, we fix our notation for both quandles and groups. 
In Section~\ref{sec:homogeneous}, we recall from \cite{IT} what homogeneous quandles are. Homogeneous quandles can be obtained from, so-called, \textit{quandle triplets} (see Subsection~\ref{subsec:quandle}). 
As a special kind of homogeneous quandle, we introduce generalized Alexander quandles (see Subsection~\ref{subsec:genAlex}) and propose Problem~\ref{toi}. 
In Section~\ref{sec:inv}, for the investigation of Problem~\ref{toi}, we develop some invariants on generalized Alexander quandles. 
In Section~\ref{sec:sym}, we study Problem~\ref{toi} in the case of symmetric groups.

\begin{acknowledgement}
The authors would like to thank Hiroshi Tamaru and Takayuki Okuda for many helpful comments and stimulating discussions. 
The authors are partially supported by JSPS Grant-in-Aid for Young Scientists (B) $\sharp$16K17604. 
\end{acknowledgement}

\bigskip


\section{Terminologies and notation}\label{sec:notation}

Throughout this paper, we denote by $(Q,s)$ (or $Q$, for short) a quandle $Q$ equipped with the maps $\{s_x : x \in Q\}$ satisfying (Q1')--(Q3').

\subsection{Terminologies for quandles}

Let $(Q,s)$ and $(Q',s')$ be quandles. 
A map $f:Q \rightarrow Q'$ is called a {\em quandle homomorphism} if $f$ satisfies 
$$f \circ s_x = s_{f(x)}' \circ f \;\; \text{ for any } x \in Q.$$
Moreover, when $f$ is bijective, $f$ is said to be a {\em quandle automorphism}. 
If there is a quandle automorphism between $Q$ and $Q'$, then we say that $Q$ and $Q'$ are {\em isomorphic as quandles}, 
denoted by $Q \cong_Q Q'$. 

Let $\auto(Q,s)$ (or $\auto(Q)$, for short) be the set of quandle automorphisms from $(Q,s)$ to $(Q,s)$ itself, 
which is called the {\em automorphism group} of $Q$. 
Remark that $s_x \in \auto(Q)$ for any $x \in Q$. 
Let $\inn(Q,s)$ (or $\inn(Q)$, for short) be the subgroup of $\auto(Q)$ generated by $\{s_x : x \in Q\}$, which is called the {\em inner automorphism group} of $Q$. 

\subsection{Terminologies for groups}\label{sec:Term_group}

Let $G$ be a group with its unit $e$. Let $\auto(G)$ denote the automorphism group of $G$. 
For two groups $G$ and $G'$, we denote by $G \cong_G G'$ if $G$ and $G'$ are isomorphic as groups. 

For $g,h \in G$, we use a usual notation $g^h=hgh^{-1}$. 
Also, we use $g^G$, which stands for the conjugacy class with respect to $g \in G$. 
Let $\inn(G)$ denote the inner automorphism group of $G$, i.e., the set of the group automorphisms 
defined by $x \mapsto x^g$ for some $g \in G$, denoted by $(\cdot)^g$. 
It is well known that $\inn(G) \cong_G G/Z(G)$, where $Z(G)$ denotes the center of $G$ (e.g. \cite[Theorem 7.1]{Rotman}). 
Note that $\inn(G)$ is a normal subgroup of $\auto(G)$. 
We call $\psi$ an \textit{outer automorphism} if $\psi \in \auto(G) \setminus \inn(G)$.

Given $\psi \in \auto(G)$, let $$\fix(\psi,G)=\{g \in G : \psi(g)=g\}.$$ 
Note that $\fix(\psi,G)$ is a subgroup of $G$ and when we consider $(\cdot)^g \in \inn(G)$, we have $\fix((\cdot)^g,G)=C_G(g)$, 
where $C_G(g)=\{x \in G : gx = xg\}$ is the centralizer of $g$. 

\bigskip


\section{Homogeneous quandles and Quandle triplets}\label{sec:homogeneous}

In this section, we recall the notion of homogeneous quandles and study the relationship between homogeneous quandles and quandle triplets. 
These notions have their origin in \cite[Section 7]{Joyce} and the discussions were developed in \cite[Section 3]{IT} in detail.

\subsection{Correspondence between homogeneous quandles and quandle triplets}\label{subsec:quandle}

\begin{defi}[{\cite{IT}}]
Let $Q$ be a quandle. 
We say that $Q$ is {\em homogeneous} if $\auto(Q)$ acts transitively on $Q$, i.e., for any $x,y \in Q$, 
there exists $f \in \auto(Q)$ such that $f(x) = y$. 
\end{defi}

\begin{defi}[{\cite[Definition 3.1]{IT}}] Let $G$ be a group, let $K$ be a subgroup of $G$ and let $\psi \in \auto(G)$. 
We say that $(G,K,\psi)$ is a {\em quandle triplet} if $K \subset \fix(\psi,G)$. \end{defi}

In \cite[Section 3]{IT}, a correspondence between homogeneous quandles and quandle triplets is provided. 
Let us recall how to construct the homogeneous quandle from a quandle triplet. 
Let $(G,K,\psi)$ be a quandle triplet. We define a quandle as follows. 
We denote by $G/K=\{[g] : g \in G\}$ the set of left cosets with respect to $K$, and we set $$s_{[g]}([h]):=[g\psi(g^{-1}h)]$$ 
for $[g],[h] \in G/K$. It is proved in \cite[Proposition 3.2]{IT} that the set $G/K$ 
equipped with this map becomes a homogeneous quandle. 
Let us denote the homogeneous quandle constructed in this way by $Q(G,K,\psi)$.

\subsection{Generalized Alexander quandles}\label{subsec:genAlex}

Let $G$ be a group with its unit $e$. Given a group automorphism $\psi \in \auto(G)$, 
a triplet $(G,\{e\},\psi)$ trivially becomes a quandle triplet. 
Let $Q(G,\psi)=Q(G,\{e\},\psi)$, i.e., $Q(G,\psi)$ is the homogeneous quandle $(G,s)$, where $s$ is defined by 
$$s_g(h):=g\psi(g^{-1}h)\;\;\text{ for any }g,h \in G.$$ 
This quandle $Q(G,\psi)$ is also known as a {\em generalized Alexander quandle}. 
Note that in the case $G$ is a cyclic group, $Q(G,\psi)$ is called an {\em Alexander quandle} (see \cite{Nelson}).

One reason to study the generalized Alexander quandles (i.e., to specialize the case $K=\{e\}$) can be seen by the following. 
\begin{prop}\label{prop:divided_by_fix} Let $\psi,\psi' \in \auto(G)$ and let $K = \fix(\psi,G)$ and $K'=\fix(\psi',G)$. 
If $Q(G,\psi) \cong_Q Q(G,\psi')$, then $Q(G,K,\psi) \cong_Q Q(G,K',\psi')$. \end{prop}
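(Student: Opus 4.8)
The plan is to start with a quandle isomorphism $f\colon Q(G,\psi)\to Q(G,\psi')$ and to show it descends to a well-defined map on the quotients $G/K \to G/K'$. First I would normalize $f$ so that $f(e)=e$: since $Q(G,\psi')$ is homogeneous (indeed $G$ acts on it by left translations, which are quandle automorphisms of a generalized Alexander quandle), we may post-compose $f$ with the left translation by $f(e)^{-1}$; this new map is still a quandle isomorphism, so without loss of generality $f(e)=e$. The key structural fact I expect to need is a description of $f$ in group-theoretic terms. Unwinding the isomorphism condition $f\circ s_g = s'_{f(g)}\circ f$ at $g=e$ gives $f(\psi(h)) = \psi'(f(h))$ for all $h$, i.e. $f$ intertwines $\psi$ and $\psi'$; combining this with the condition at general $g$ should force $f$ to be of the form $f(h) = \phi(h)$ for a group automorphism $\phi$ (or at least something rigid enough), since $f(e)=e$ and $f$ respects the operation built from left multiplication and $\psi$.

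Granting such a description, the next step is the descent. I want to show $f(gK)\subseteq f(g)K'$ — more precisely that $f$ induces a bijection $\bar f\colon G/K\to G/K'$ with $\bar f([g])=[f(g)]$. For this I need $f(K)=K'$, i.e. $f$ carries $\fix(\psi,G)$ onto $\fix(\psi',G)$. This is exactly where the relation $f\circ\psi = \psi'\circ f$ (equivalently $\psi'\circ f = f\circ\psi$) does the work: if $\psi(h)=h$ then $\psi'(f(h)) = f(\psi(h)) = f(h)$, so $f(\fix(\psi,G))\subseteq\fix(\psi',G)$, and applying the same argument to $f^{-1}$ (which intertwines $\psi'$ and $\psi$, and fixes $e$ after the normalization) gives the reverse inclusion. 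Hence $f(K)=K'$, and because $f$ is a bijection compatible with whatever left-multiplicative structure it has, it sends left $K$-cosets to left $K'$-cosets, so $\bar f$ is a well-defined bijection.

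Finally I would check that $\bar f$ is a quandle homomorphism for the induced quandle structures: using $s_{[g]}([h]) = [g\psi(g^{-1}h)]$ on $Q(G,K,\psi)$ and $s'_{[g']}([h']) = [g'\psi'(g'^{-1}h')]$ on $Q(G,K',\psi')$, one computes $\bar f(s_{[g]}([h])) = [f(g\psi(g^{-1}h))]$ and $s'_{\bar f([g])}(\bar f([h])) = [f(g)\psi'(f(g)^{-1}f(h))]$, and these agree because $f$ is a quandle isomorphism of the generalized Alexander quandles and intertwines $\psi,\psi'$. Since $\bar f$ is bijective, this yields $Q(G,K,\psi)\cong_Q Q(G,K',\psi')$.

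The main obstacle I anticipate is pinning down the precise form of $f$ in step one. It is tempting to assert "$f$ is a group automorphism after normalization," but a general quandle isomorphism of generalized Alexander quandles need not literally be a group homomorphism; what one really needs is only the weaker consequence $f\circ\psi=\psi'\circ f$ together with the coset-compatibility, so the careful point is to extract exactly these from the axiom $f\circ s_g = s'_{f(g)}\circ f$ without over-claiming. Once the intertwining relation and $f(e)=e$ are in hand, the rest (the two inclusions for $f(K)=K'$, well-definedness of $\bar f$, and the homomorphism check) is routine bookkeeping with the coset formula.
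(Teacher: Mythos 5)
Your proposal has a genuine gap at the descent step. After normalizing $f(e)=e$ you correctly obtain the intertwining relation $f\circ\psi=\psi'\circ f$ (this is just $f\circ s_e=s'_e\circ f$), and from it you correctly get $f(K)=K'$. But that only controls the single coset $K$: to define $\bar f([g])=[f(g)]$ you need $x^{-1}y\in K \Rightarrow f(x)^{-1}f(y)\in K'$ for \emph{all} $x,y$, and you deduce this from ``$f$ is a bijection compatible with whatever left-multiplicative structure it has,'' which is exactly the point that is not established. A quandle isomorphism of generalized Alexander quandles is in general neither a group homomorphism nor compatible with left translations (left translations $L_h$ are themselves quandle automorphisms but not group maps), so $f(K)=K'$ does not by itself send $gK$ onto $f(g)K'$. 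You flag this worry yourself in your last paragraph, but you never close it, and the intertwining relation at $e$ alone cannot close it.

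The missing idea, which is how the paper argues, is that coset membership is itself a quandle-theoretic condition: $x^{-1}y\in\fix(\psi,G)$ if and only if $s_x(y)=y$, since $s_x(y)=x\psi(x^{-1}y)$. Hence if $[x]_K=[y]_K$ then $s_x(y)=y$, and applying $f\circ s_x=s'_{f(x)}\circ f$ at the point $x$ gives $s'_{f(x)}(f(y))=f(y)$, i.e.\ $f(x)^{-1}f(y)=\psi'\bigl(f(x)^{-1}f(y)\bigr)\in K'$. This proves well-definedness of $\bar f$ directly, with no normalization $f(e)=e$ and no claim about the group-theoretic form of $f$; the homomorphism property of $\bar f$ and bijectivity (via $f^{-1}$) are then the routine computations you describe. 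So your overall architecture (induce a map on cosets, check well-definedness, homomorphism, bijectivity) matches the paper, but the one nontrivial step is exactly the one your argument leaves unproved; replace the appeal to left-multiplicative compatibility by the fixed-point characterization of cosets and the proof goes through.
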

\begin{proof}
Let $f:Q(G,\psi) \rightarrow Q(G,\psi')$ be a quandle automorphism. We define a map 
$\widetilde{f}:Q(G,K,\psi) \rightarrow Q(G,K',\psi')$ by $\widetilde{f}([x]_K):=[f(x)]_{K'}$ for $[x]_K \in G/K$, 
where $[ \cdot ]_K$ and $[ \cdot ]_{K'}$ stand for the left cosets with respect to $K$ and $K'$, respectively. 
In what follows, we will prove that $\widetilde{f}$ is a quandle automorphism. 

\noindent
{\bf Well-definedness}: Assume $[x]_K=[y]_K$ for $[x]_K,[y]_K \in G/K$. 
Then $x^{-1}y \in K$. Thus, $x^{-1}y=\psi(x^{-1}y)$. Hence, $y=x \cdot x^{-1}y = x \psi(x^{-1}y)=s_x(y)$. 
Moreover, since $f \circ s_x(y)=s_{f(x)}' \circ f(y)$ and $s_x(y)=y$, we obtain that $f(y)=f(x)\psi'(f(x)^{-1}f(y))$. 
Thus, $f(x)^{-1}f(y)=\psi'(f(x)^{-1}f(y))$. Hence, $f(x)^{-1}f(y) \in K'$, i.e., $[f(x)]_{K'}=[f(y)]_{K'}$. 
Therefore, $\widetilde{f}([x]_K)=\widetilde{f}([y]_K)$.

\noindent
{\bf Quandle homomorphism}: We see that \begin{align*}
\widetilde{f} \circ s_{[x]_K}([y]_K)&=\widetilde{f}([x\psi(x^{-1}y)]_K)=[f \circ s_x(y)]_{K'}=[s_{f(x)}' \circ f(y)]_{K'} \\
&=[f(x)\psi'(f(x)^{-1}f(y))]_{K'} =s_{\widetilde{f}([x]_K)}' \circ \widetilde{f}([y]_K). \end{align*}

\noindent
{\bf Bijectivity}: Let $\widetilde{f}^{-1}:Q(G,K',\psi') \rightarrow Q(G,K,\psi)$ by $\widetilde{f}^{-1}([y]_{K'}):=[f^{-1}(y)]_K$ for $[y]_{K'} \in G/K'$. 
Then $\widetilde{f}^{-1}$ is also well-defined and this implies the bijectivity of $\widetilde{f}$. 
\end{proof}

Given a finite group $G$, let $\Qc(G)$ be the set of quandle isomorphic classes of $Q(G,\psi)$'s, i.e., 
$$\Qc(G):=\{Q(G,\psi) : \psi \in \auto(G)\}/\cong_Q.$$

The following problem naturally arises. 
\begin{prob}\label{toi}
Determine $\Qc(G)$ for a given group $G$. 
\end{prob}

The following proposition says that we can roughly classify $Q(G,\psi)$ for $\psi \in \auto(G)$ up to quandle isomorphism 
by seeing the conjugacy classes of $\auto(G)$. 
\begin{prop}\label{prop:conj}
Let $\psi,\psi' \in \auto(G)$ and assume that $\psi$ and $\psi'$ are conjugate. Then we have $Q(G,\psi) \cong_Q Q(G,\psi')$. 	
\end{prop}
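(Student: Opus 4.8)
The plan is to show that the conjugating automorphism, regarded simply as a bijection of the underlying set $G$, already is the desired quandle isomorphism. Write $\psi' = \varphi \circ \psi \circ \varphi^{-1}$ for some $\varphi \in \auto(G)$, and view $\varphi$ as a map $Q(G,\psi) \to Q(G,\psi')$. Since $\varphi \in \auto(G)$, this map is automatically bijective, so the only thing to verify is that it intertwines the point symmetries, i.e.\ that $\varphi \circ s_g = s'_{\varphi(g)} \circ \varphi$ for every $g \in G$, where $s$ and $s'$ denote the point symmetries of $Q(G,\psi)$ and $Q(G,\psi')$, respectively. Once this is checked, $\varphi$ is a quandle homomorphism and hence a quandle automorphism in the sense of Subsection~2.1, giving $Q(G,\psi)\cong_Q Q(G,\psi')$.

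The verification is a direct computation using only that $\varphi$ is a group homomorphism and the defining formula $s_g(h)=g\psi(g^{-1}h)$. On one side, $\varphi(s_g(h)) = \varphi\bigl(g\,\psi(g^{-1}h)\bigr) = \varphi(g)\,(\varphi\circ\psi)(g^{-1}h)$. On the other side, expanding the definition of $s'$ and using $\varphi(g)^{-1}\varphi(h)=\varphi(g^{-1}h)$ together with $\psi' = \varphi\circ\psi\circ\varphi^{-1}$,
\begin{align*}
s'_{\varphi(g)}(\varphi(h)) &= \varphi(g)\,\psi'\bigl(\varphi(g)^{-1}\varphi(h)\bigr)
= \varphi(g)\,\psi'\bigl(\varphi(g^{-1}h)\bigr) \\
&= \varphi(g)\,(\varphi\circ\psi\circ\varphi^{-1})\bigl(\varphi(g^{-1}h)\bigr)
= \varphi(g)\,(\varphi\circ\psi)(g^{-1}h).
\end{align*}
The two expressions coincide, which completes the argument.

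I expect no real obstacle here: the whole content is the observation that conjugation of automorphisms corresponds, at the level of generalized Alexander quandles, to relabelling the underlying group by an automorphism. The only place where care is needed is bookkeeping of inverses—keeping straight that $\psi' = \varphi\psi\varphi^{-1}$ is being applied to an element of the form $\varphi(x)$, so that the outer $\varphi^{-1}$ cancels—since that is essentially the one spot where an error could slip in.
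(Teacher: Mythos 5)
Your proof is correct and is essentially identical to the paper's: both take the conjugating automorphism $\varphi$ (the paper calls it $\tau$) as the candidate bijection $Q(G,\psi)\to Q(G,\psi')$ and verify $\varphi\circ s_g = s'_{\varphi(g)}\circ\varphi$ by the same direct computation with $\psi'=\varphi\circ\psi\circ\varphi^{-1}$.
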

\begin{proof}
Let $(Q,s)=Q(G,\psi)$ and $(Q',s')=Q(G,\psi')$. 
Let $\psi'=\psi^\tau$ for some $\tau \in \auto(G)$. Clearly, $\tau$ gives a bijection between $Q$ and $Q'$ (note that $Q=Q'=G$ as sets). 
It is enough to show that $\tau$ is a quandle homomorphism, i.e., $\tau \circ s_x = s_{\tau(x)}' \circ \tau.$

Given any $y \in G$, we see that 
\begin{align*}
s_{\tau(x)}' \circ \tau(y)&=\tau(x)\psi'(\tau(x)^{-1}\tau(y)) =\tau(x)\psi'(\tau(x^{-1}y)) =\tau(x)\tau\circ\psi\circ\tau^{-1}(\tau(x^{-1}y)) \\
&=\tau(x)\tau(\psi(x^{-1}y)) =\tau(x\psi(x^{-1}y))=\tau \circ s_x(y), \end{align*}
as required. 
\end{proof}

Let $C_n$ be the cyclic group of order $n$. 
\begin{rem}[{\cite{Nelson}}]\label{rem:cyclic}
In the case where $G=C_n$, Problem~\ref{toi} is completely solved in \cite{Nelson} as follows. 
It is well known that $\auto(C_n) \cong_G U(C_n)$ (see \cite[Lemma 7.2]{Rotman}), where $U(C_n)=\{x \in C_n : x \text{ is coprime to }n\}$ is the group of units of $C_n$. 
More precisely, given $a \in U(C_n)$, the automorphism of $C_n$ is defined by $x \mapsto ax$ for each $x \in C_n$. 
It is proved in \cite[Corollary 2.2]{Nelson} that $Q(C_n, a) \cong_Q Q(C_n,b)$ for $a,b \in U(C_n)$ if and only if 
$N(n,a)=N(n,b)$ and $a \equiv b \pmod{N(n,a)}$, where $N(n,a)=\frac{n}{\mathrm{gcd}(n,1-a)}$. 
Namely, $\Qc(C_n)$ is completely characterized. 

For example, $Q(C_9,4) \cong_Q Q(C_9,7)$ since $N(9,4)=N(9,7)=3$ and $4 \equiv 7 \pmod{3}$. 
On the other hand, since $U(C_n)$ is abelian, the set of the conjugacy classes of $U(C_n)$ is nothing but $U(C_n)$ itself. 
Hence, this example shows that the set of the conjugacy classes of $\auto(C_n)$ does not necessarily one-to-one correspond to $\Qc(C_n)$. 
\end{rem}

\bigskip


\section{Invariants on homogeneous quandles}\label{sec:inv}

Towards a solution of Problem \ref{toi}, we establish some quandle invariants for $Q(G,\psi)$. Throughout this section, we fix a finite group $G$.

At first, we check that $\inn(Q,s)$ is a quandle invariant as follows. 
\begin{prop}\label{meidai1}
A quandle automorphism $f:(Q,s) \rightarrow (Q',s')$ induces a group automorphism of two groups $\inn(Q,s)$ and $\inn(Q',s')$. 
Namely, the map defined by 
$$f:\inn(Q,s) \rightarrow \inn(Q',s'), \; s_g \mapsto s_{f(g)}'$$
which is extended as a group homomorphism is a group automorphism. 
In particular, if $(Q,s) \cong_Q (Q',s')$, then $\inn(Q,s) \cong_G \inn(Q',s')$. 
\end{prop}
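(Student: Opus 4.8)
The plan is to identify the prescribed assignment with conjugation by $f$ and then observe that conjugation respects the generating sets of the two inner automorphism groups. Since $f : Q \to Q'$ is a bijection, the rule $\Phi(\phi) := f \circ \phi \circ f^{-1}$ defines a group isomorphism from the group of all permutations of $Q$ onto the group of all permutations of $Q'$, with inverse $\psi' \mapsto f^{-1} \circ \psi' \circ f$. The one computation that does the work is the following: because $f$ is a quandle homomorphism we have $f \circ s_x = s'_{f(x)} \circ f$ for all $x \in Q$, and since $f$ is invertible this rearranges to $\Phi(s_x) = f \circ s_x \circ f^{-1} = s'_{f(x)}$, and hence also $\Phi(s_x^{-1}) = (s'_{f(x)})^{-1}$.

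Next I would use that, by definition, $\inn(Q,s)$ is generated by $\{s_x : x \in Q\}$, so every $\phi \in \inn(Q,s)$ can be written as $\phi = s_{x_1}^{\varepsilon_1} \cdots s_{x_k}^{\varepsilon_k}$ with $x_i \in Q$ and $\varepsilon_i \in \{\pm 1\}$. Applying the homomorphism $\Phi$ together with the computation above gives $\Phi(\phi) = (s'_{f(x_1)})^{\varepsilon_1} \cdots (s'_{f(x_k)})^{\varepsilon_k}$, which lies in $\inn(Q',s')$. Thus $\Phi$ restricts to a group homomorphism $\inn(Q,s) \to \inn(Q',s')$; since this restriction sends each generator $s_g$ to $s'_{f(g)}$, it is precisely the map described in the statement, and in particular the recipe ``send $s_g$ to $s'_{f(g)}$ and extend multiplicatively'' is well defined, because it is nothing but the genuine homomorphism $\Phi$ and so cannot clash with any relation among the $s_g$. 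Injectivity is immediate: $\Phi(\phi) = \id_{Q'}$ forces $\phi = \id_Q$. For surjectivity onto $\inn(Q',s')$, any element there is a word $(s'_{y_1})^{\varepsilon_1} \cdots (s'_{y_k})^{\varepsilon_k}$, and writing $y_i = f(x_i)$ (possible since $f$ is onto) exhibits it as $\Phi(s_{x_1}^{\varepsilon_1} \cdots s_{x_k}^{\varepsilon_k})$.

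Putting these together, $\Phi$ restricts to a group isomorphism $\inn(Q,s) \to \inn(Q',s')$ carrying $s_g$ to $s'_{f(g)}$, which is the assertion; the final sentence of the statement then follows by taking for $f$ any quandle isomorphism realizing $(Q,s) \cong_Q (Q',s')$. I do not anticipate a real obstacle here: the only point that needs a moment's thought is the well-definedness of ``extend $s_g \mapsto s'_{f(g)}$ as a group homomorphism'', and this is resolved for free once one recognizes the map as conjugation by $f$ restricted to $\inn(Q,s)$.
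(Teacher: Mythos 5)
Your proof is correct and rests on the same key identity as the paper's, namely $f \circ s_x \circ f^{-1} = s'_{f(x)}$: the paper establishes well-definedness by conjugating equal words in the generators by $f$, which is exactly your conjugation map $\Phi$ restricted to $\inn(Q,s)$. Your packaging via the ambient symmetric groups just makes explicit the same argument, so this is essentially the paper's proof.
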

\begin{proof}
This $f$ is obviously a group homomorphism and bijective if it is well-defined. Thus, what we have to show is the well-definedness. 

Let $s_{g_1} \circ \cdots s_{g_k} = s_{h_1} \circ \cdots \circ s_{h_\ell} \in \inn(Q)$. 
Then $s_{g_1} \circ \cdots s_{g_k} \circ f^{-1}= s_{h_1} \circ \cdots \circ s_{h_\ell} \circ f^{-1}$ as 
quandle homomorphisms from $(Q',s')$ to $(Q,s)$. Moreover, since $f$ is a quandle homomorphism, we see that \begin{align*}
s_{g_1} \circ \cdots s_{g_k} \circ f^{-1} &= f^{-1} \circ s_{f(g_1)}' \circ \cdots \circ s_{f(g_k)}' \text{ and }\\
s_{h_1} \circ \cdots s_{h_\ell} \circ f^{-1} &= f^{-1} \circ s_{f(h_1)}' \circ \cdots \circ s_{f(h_\ell)}'. \end{align*}
Therefore, \begin{align*}
\underbrace{f \circ f^{-1}}_{\id} \circ s_{f(g_1)}' \circ \cdots \circ s_{f(g_k)}' = 
\underbrace{f \circ f^{-1}}_{\id} \circ s_{f(h_1)}' \circ \cdots \circ s_{f(h_\ell)}'. \end{align*}
\end{proof}

Next, we focus on the order of each element. Let $\ord_G(g)$ denote the order of $g \in G$. 
Similarly, for a quandle $(Q,s)$, let $\ord(s_x)=\min\{n \in \ZZ_{>0} : \underbrace{s_x \circ \cdots \circ s_x}_n=\id\}$. 
For a homogeneous quandle $Q$, we see that $\ord(s_x)$ is constant for any $x \in Q$. 
In fact, for any $x,y \in Q$, since there is $f \in \auto(Q)$ such that $y=f(x)$, we see that 
if $s_x^m=\id$, then $f=f \circ s_x^m=s_{f(x)}^m \circ f$, so we obtain $s_{f(x)}^m=\id$, as required. 
Thus, we use the notation $\ord(Q)$ instead of $\ord(s_x)$ for $x \in Q$ in the case $Q$ is homogeneous. 
Moreover, we also see that $\ord(Q)$ is a quandle invariant. In the case of $Q(G,\psi)$, we can compute this as follows. 

\begin{prop}
One has $\ord(Q(G,\psi))=\ord_{\auto(G)}(\psi)$. 
\end{prop}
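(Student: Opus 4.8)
The plan is to fix an arbitrary $g \in G$, compute the iterates $s_g^n$ explicitly as maps $G \to G$, and read off the least $n$ with $s_g^n = \id$. The key claim is the closed form
$$s_g^n(h) = g\,\psi^n(g^{-1}h) \quad \text{for all } h \in G \text{ and all } n \geq 1,$$
which I would prove by a short induction on $n$. The base case $n=1$ is the definition of $s_g$. For the inductive step one substitutes $s_g^n(h) = g\psi^n(g^{-1}h)$ into $s_g\big(s_g^n(h)\big) = g\,\psi\big(g^{-1} s_g^n(h)\big)$ and simplifies: the factor $g^{-1}$ cancels the leading $g$ inside the argument of $\psi$, leaving $g\,\psi\big(\psi^n(g^{-1}h)\big) = g\,\psi^{n+1}(g^{-1}h)$, as wanted.

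Given this formula, the condition $s_g^n = \id$ becomes $g\,\psi^n(g^{-1}h) = h$ for every $h \in G$, equivalently $\psi^n(g^{-1}h) = g^{-1}h$ for every $h \in G$. Since $h \mapsto g^{-1}h$ is a bijection of $G$, this is in turn equivalent to $\psi^n(x) = x$ for every $x \in G$, i.e.\ $\psi^n = \id$ in $\auto(G)$. Taking the least such $n$ — which exists since $\auto(G)$ is finite, $G$ being finite — gives $\ord(s_g) = \ord_{\auto(G)}(\psi)$. As this value does not depend on the choice of $g$, it coincides with $\ord(Q(G,\psi))$; in particular this re-derives, in the present family, the fact noted above that $\ord(s_x)$ is constant on a homogeneous quandle.

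I do not expect a genuine obstacle here: once the closed form for $s_g^n$ is in hand, the remainder is a routine translation between ``$s_g^n$ fixes every element of $G$'' and ``$\psi^n$ fixes every element of $G$''. The only point deserving a moment's care is the reindexing $x = g^{-1}h$, which relies on left translation by $g^{-1}$ being a bijection of $G$; this is precisely what makes $\ord(s_g)$ independent of $g$ and equal to $\ord_{\auto(G)}(\psi)$, rather than something possibly smaller depending on the $\psi$-orbit of $g$.
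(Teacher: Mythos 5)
Your proposal is correct and follows essentially the same route as the paper: establish the closed form $s_g^n(h)=g\,\psi^n(g^{-1}h)$ (the paper does this by direct iterated computation, you by an explicit induction) and then translate $s_g^n=\id$ into $\psi^n=\id$ via the bijection $h\mapsto g^{-1}h$. No gaps.
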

\begin{proof}
Our goal is to show that $s_x^m=\id$ implies $\psi^m=\id$, and vice versa. 
A direct computation shows that 
\begin{equation}\label{eq:s_x^i}
\begin{split}
s_x^m(y)&=s_x^{m-1}(x\psi(x^{-1}y))=s_x^{m-2}(x\psi(x^{-1}x\psi(x^{-1}y)))=s_x^{m-2}(x\psi^2(x^{-1}y))=\cdots\\
&=x\psi^m(x^{-1}y)
\end{split}
\end{equation}
for any $y \in G$. 
\begin{itemize}
\item If $\psi^m=\id$, then $s_x^m(y) = y$. 
\item If $s_x^m=\id$, since $x\psi^m(x^{-1}y)=y$ for any $y \in G$, we have $\psi^m(x^{-1}y)=x^{-1}y$ for any $y \in G$. 
This implies that $\psi^m=\id$. 
\end{itemize}
\end{proof}

Let $K$ be a subgroup of $G$ and let $[G:K]$ denote the index of $K$. 
\begin{prop}There is a one-to-one correspondence 
between $\{s_x : x \in Q(G,\psi)\}$ and $G/\fix(\psi,G)$. 
In particular, we have $|\{s_x : x \in Q(G,\psi)\}|=[G:\fix(\psi,G)]$. \end{prop}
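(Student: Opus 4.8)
The plan is to show directly that $s_x = s_y$ as maps on $Q(G,\psi)$ if and only if $x$ and $y$ lie in the same left coset of $\fix(\psi,G)$, which immediately yields both the bijection and the cardinality count. First I would write out what $s_x = s_y$ means: using the definition $s_x(h) = x\psi(x^{-1}h)$, the equality $s_x(h) = s_y(h)$ for all $h \in G$ becomes $x\psi(x^{-1}h) = y\psi(y^{-1}h)$ for all $h$. Substituting $h = x$ to simplify (or equivalently manipulating directly), and peeling off the automorphism, this should reduce to the condition $\psi(x^{-1}y) = x^{-1}y$, i.e. $x^{-1}y \in \fix(\psi,G)$. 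I would verify both implications of this reduction carefully, since the cancellation involves $\psi$ being a homomorphism on $x^{-1}h$.

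Concretely, the forward direction: if $s_x = s_y$, evaluate at $h = x$ to get $x = s_x(x) = s_y(x) = y\psi(y^{-1}x)$, hence $\psi(y^{-1}x) = y^{-1}x$, so $y^{-1}x \in \fix(\psi,G)$, equivalently $x^{-1}y \in \fix(\psi,G)$ (since $\fix(\psi,G)$ is a subgroup). For the converse: suppose $x^{-1}y = k$ with $\psi(k) = k$. Then for any $h \in G$, write $s_y(h) = y\psi(y^{-1}h) = xk\psi(k^{-1}x^{-1}h) = xk\psi(k^{-1})\psi(x^{-1}h) = xkk^{-1}\psi(x^{-1}h) = x\psi(x^{-1}h) = s_x(h)$, using $\psi(k^{-1}) = \psi(k)^{-1} = k^{-1}$. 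This shows $s_x = s_y$. Hence the map $G/\fix(\psi,G) \to \{s_x : x \in Q(G,\psi)\}$ sending $[g] \mapsto s_g$ is well-defined and injective; it is clearly surjective by construction, giving the bijection. The cardinality statement $|\{s_x : x \in Q(G,\psi)\}| = [G:\fix(\psi,G)]$ follows at once.

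I do not anticipate a genuine obstacle here — the argument is a short direct computation — so the only thing to be careful about is the bookkeeping with inverses and the fact that $\psi$ being an automorphism lets us pass it through products and inverses freely. One minor point worth stating explicitly is that $\fix(\psi,G)$ is a subgroup (already noted in Section~\ref{sec:Term_group}), so that ``$x$ and $y$ in the same left coset'' is a well-defined equivalence relation and $x^{-1}y \in \fix(\psi,G) \iff y^{-1}x \in \fix(\psi,G)$.
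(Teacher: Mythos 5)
Your proposal is correct and follows essentially the same route as the paper: both establish the key equivalence $s_x = s_y \Leftrightarrow x^{-1}y \in \fix(\psi,G)$ by a direct computation with the formula $s_x(h)=x\psi(x^{-1}h)$, the only cosmetic difference being that you split it into two implications (evaluating at $h=x$ for one direction) while the paper runs a single chain of equivalences. Both arguments are complete and yield the bijection and the index count in the same way.
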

\begin{proof}
We will show that $\{ s_x : x \in G\}$ one-to-one corresponds to the left cosets $\{ [x] : x \in G\}$ with respect to $\fix(\psi,G)$. 
The bijectivity of those sets can be verified as follows: 
\begin{align*}
s_x(y)=s_{x'}(y) \;\text{ for any }y \in G &\Longleftrightarrow \; x\psi(x^{-1}y)=x'\psi(x'^{-1}y) \;\text{ for any }y \in G \\
&\Longleftrightarrow x^{-1}x'=\psi(x^{-1}y)\psi(x'^{-1}y)^{-1}\;\text{ for any }y \in G \\
&\Longleftrightarrow x^{-1}x'=\psi(x^{-1}x')\; \Longleftrightarrow \; x^{-1}x' \in \fix(\psi,G) \\
&\Longleftrightarrow [x]=[x']. 
\end{align*}
\end{proof}

For $i \in \ZZ_{>0}$ and for a quandle $(Q,s)$, 
let $(Q^{(i)},s^{(i)})$ (or $Q^{(i)}$, for short) be defined by $Q^{(i)}=Q$ as a set and $s_x^{(i)}:=\underbrace{s_x \circ \cdots \circ s_x}_i$. 
\begin{prop}\label{prop:i_bai}
    Work with the notation as above.
    \begin{enumerate}[label=$($\alph*$)$]
        \item Let $Q$ and $R$ be quandles. Assume that $Q \cong_Q R$. Then $Q^{(i)} \cong_Q R^{(i)}$ for any $i$.
        \item Fix $\psi \in \auto(G)$ and let $Q=Q(G,\psi)$. Then $Q^{(i)}=Q(G,\psi^i)$. 
    \end{enumerate}
\end{prop}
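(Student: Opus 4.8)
The plan is to treat the two parts separately; both are short once one preliminary point is settled, namely that $(Q^{(i)},s^{(i)})$ is genuinely a quandle. Axioms (Q1') and (Q2') for $Q^{(i)}$ are immediate, since $s_x^{(i)}$ is an $i$-fold composition of the bijection $s_x$, which fixes $x$. For (Q3'), I would start from the conjugated form of (Q3') for $Q$, namely $s_x \circ s_y \circ s_x^{-1} = s_{s_x(y)}$; applying this repeatedly (equivalently, inducting on $i$) gives $s_x^{(i)} \circ s_y \circ (s_x^{(i)})^{-1} = s_{s_x^{(i)}(y)}$, and then raising this identity to the $i$-th power yields $s_x^{(i)} \circ s_y^{(i)} \circ (s_x^{(i)})^{-1} = s_{s_x^{(i)}(y)}^{(i)}$, which is exactly (Q3') for $(Q^{(i)},s^{(i)})$.

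For part (a): write $(Q,s)$ and $(R,t)$ for the two quandles and let $f\colon Q \to R$ be a quandle isomorphism, so $f \circ s_x \circ f^{-1} = t_{f(x)}$ for every $x \in Q$. Conjugating this identity $i$ times gives $f \circ s_x^{(i)} \circ f^{-1} = (f \circ s_x \circ f^{-1})^{i} = t_{f(x)}^{(i)}$, that is, $f \circ s_x^{(i)} = t_{f(x)}^{(i)} \circ f$; hence the same underlying bijection $f$ is a quandle isomorphism $Q^{(i)} \to R^{(i)}$. The only thing to keep in mind is that $R^{(i)}$ is a quandle, which is covered by the preliminary observation above.

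For part (b): since $\psi \in \auto(G)$ we have $\psi^i \in \auto(G)$, so $Q(G,\psi^i)$ is defined, and it has underlying set $G$, just as $Q(G,\psi)^{(i)}$ does. It therefore suffices to check that their point symmetries coincide. By formula~\eqref{eq:s_x^i} applied with $m = i$, the point symmetry of $Q(G,\psi)^{(i)}$ at $g$ is the map $h \mapsto s_g^{(i)}(h) = g\psi^i(g^{-1}h)$, which is by definition the point symmetry of $Q(G,\psi^i)$ at $g$. Hence $Q(G,\psi)^{(i)}$ and $Q(G,\psi^i)$ are equal as quandles, not merely isomorphic.

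I do not expect a genuine obstacle here; the only step that needs a moment's care is the verification of (Q3') for $Q^{(i)}$ — in particular the iteration and conjugation bookkeeping — which is why I would dispatch it at the outset before proving (a) and (b).
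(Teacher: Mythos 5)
Your proof is correct and takes essentially the same route as the paper: part (a) is the same computation $f \circ s_x^{(i)} \circ f^{-1} = (f\circ s_x \circ f^{-1})^{i} = s_{f(x)}'^{(i)}$, and part (b) reads the identity of the two point symmetries directly off formula \eqref{eq:s_x^i}. The only difference is your preliminary check that $(Q^{(i)},s^{(i)})$ satisfies the quandle axioms (via the conjugation identity $s_x^{(i)} \circ s_y \circ (s_x^{(i)})^{-1} = s_{s_x^{(i)}(y)}$), which the paper leaves implicit; this check is correct and harmless.
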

\begin{proof}
(a) Let $Q=(Q,s)$ and $R=(R,s')$ and let $f : Q \rightarrow R$ be a quandle automorphism. 
Then the same $f$ gives a bijection between $Q^{(i)}$ and $R^{(i)}$. We may show that $f$ is a quandle homomorphism between them, but it is straightforward: 
\begin{align*}
f \circ s_x^{(i)}= f \circ \underbrace{s_x \circ \cdots \circ s_x}_i = \underbrace{s_{f(x)}' \circ \cdots \circ s_{f(x)}'}_i \circ f = s_{f(x)}'^{(i)} \circ f. 
\end{align*}

(b) The equality $Q(G,\psi)^{(i)}=Q(G,\psi^i)$ directly follows from \eqref{eq:s_x^i}. 
\end{proof}

We summarize Proposition~\ref{prop:conj} and the propositions proved in this section. 
\begin{thm}\label{thm:main}
Let $\psi,\psi' \in \auto(G)$ and let $Q=Q(G,\psi)$ and let $Q'=Q(G,\psi')$. 
\begin{enumerate}[label=$($\alph*$)$]
    \item If $\psi'=\psi^\tau$ for some $\tau \in \auto(G)$,
     then $Q \cong_Q Q'$. 
     \item If $Q \cong_Q Q'$, then 
     \begin{itemize}
        \item $\ord_{\auto(G)}\psi=\ord_{\auto(G)}\psi'$; 
        \item $[G:\fix(\psi,G)]=[G:\fix(\psi',G)]$, i.e., $|\fix(\psi,G)|=|\fix(\psi',G)|$; 
        \item $\inn(Q) \cong_G \inn(Q')$; 
        \item $Q(G,\psi^i) \cong_Q Q(G,\psi'^i)$ for any $i \in \ZZ_{>0}$. 
        \end{itemize}        
\end{enumerate}
\end{thm}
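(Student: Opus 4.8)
The statement is essentially a repackaging of results already established in this section, so the plan is simply to assemble them in the right order. Part (a) is nothing but Proposition~\ref{prop:conj}: if $\psi' = \psi^\tau$ then $\tau$ itself, viewed as a bijection of the underlying set $G = Q = Q'$, is shown there to be a quandle homomorphism, hence a quandle isomorphism $Q \cong_Q Q'$.

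For part (b), assume $f \colon Q \to Q'$ is a quandle isomorphism; note that $Q$ and $Q'$ are both homogeneous, being generalized Alexander quandles. First, $\ord_{\auto(G)}\psi = \ord_{\auto(G)}\psi'$ follows by combining the proposition that $\ord(Q(G,\psi)) = \ord_{\auto(G)}(\psi)$ with the observation, recorded just before that proposition, that $\ord(Q)$ depends only on the quandle isomorphism class of a homogeneous quandle. Next, $\inn(Q) \cong_G \inn(Q')$ is exactly Proposition~\ref{meidai1}. Finally, for the last bullet, Proposition~\ref{prop:i_bai}(b) identifies $Q^{(i)}$ with $Q(G,\psi^i)$ and $Q'^{(i)}$ with $Q(G,\psi'^i)$, while Proposition~\ref{prop:i_bai}(a) turns the isomorphism $Q \cong_Q Q'$ into $Q^{(i)} \cong_Q Q'^{(i)}$; together these give $Q(G,\psi^i) \cong_Q Q(G,\psi'^i)$ for every $i \in \ZZ_{>0}$.

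The only point not literally stated as one of the earlier propositions is that $|\{s_x : x \in Q\}|$ is a quandle invariant, which yields $[G:\fix(\psi,G)] = [G:\fix(\psi',G)]$ through the proposition identifying $|\{s_x : x \in Q(G,\psi)\}|$ with $[G:\fix(\psi,G)]$; the equality $|\fix(\psi,G)| = |\fix(\psi',G)|$ is then immediate since $G$ is finite. For the invariance I would check that $s_x \mapsto s'_{f(x)}$ is a well-defined bijection from $\{s_x : x \in Q\}$ onto $\{s'_y : y \in Q'\}$: since $f$ is a quandle isomorphism, $f \circ s_x = s'_{f(x)} \circ f$, so $s'_{f(x)} = f \circ s_x \circ f^{-1}$, and hence $s_x = s_{x'}$ if and only if $s'_{f(x)} = s'_{f(x')}$; surjectivity holds because $f$ is onto. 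I do not expect any real obstacle: the theorem is a corollary of the work already done in the section, and this short bijection argument is the only ingredient requiring a separate line.
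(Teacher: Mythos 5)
Your proposal is correct and matches the paper's intent: the theorem is stated there as a summary of Proposition~\ref{prop:conj} and the propositions of Section~\ref{sec:inv}, with no separate proof given, and you assemble exactly those results. Your extra check that $s_x \mapsto s'_{f(x)}$ gives a bijection $\{s_x : x \in Q\} \to \{s'_y : y \in Q'\}$ (via $s'_{f(x)} = f \circ s_x \circ f^{-1}$) correctly supplies the one small step the paper leaves implicit for the $[G:\fix(\psi,G)]$ invariant.
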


\bigskip


\section{The case of symmetric groups}\label{sec:sym}

Let $\Sf_n$ (resp. $\Af_n$) denote the symmetric (resp. alternating) group on $\{1,\ldots,n\}$. 
In this section, we develop the invariants appearing in Theorem~\ref{thm:main} for generalized Alexander quandles arising from $\Sf_n$. 
After preparing the invariants, we discuss Problem~\ref{toi} for $\Sf_n$. 

For the fundamental materials on $\Sf_n$, see, e.g., \cite{Rotman}. 

\subsection{Fundamental facts on $\Sf_n$}
We first recall the structure of $\auto(\Sf_n)$. 
\begin{prop}[{cf. \cite[Section 7]{Rotman}}]\label{prop:auto_S_n}
We have $$\auto(\Sf_n)=\inn(\Sf_n) $$ 
if $n \neq 2,6$, $\auto(\Sf_2)=\{\mathrm{id}\}$, and $\auto(\Sf_6) \cong_G \Sf_6 \rtimes C_2$. 
\end{prop}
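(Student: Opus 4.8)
The plan is to prove the three assertions separately, the crux being the classical observation that for $n \neq 6$ the transpositions form the unique conjugacy class of involutions of its cardinality, so that every automorphism of $\Sf_n$ must carry transpositions to transpositions. The case $n = 2$ is immediate, since $\Sf_2 \cong C_2$ admits only the identity automorphism, which is also its only inner automorphism.

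So fix $n \geq 3$ and $\psi \in \auto(\Sf_n)$. Since $\psi$ preserves orders of elements and sends generating sets to generating sets, it maps the conjugacy class of transpositions to a conjugacy class consisting of involutions that generates $\Sf_n$. The involution classes of $\Sf_n$ are the classes $T_k$ (for $1 \leq k \leq \lfloor n/2 \rfloor$) of products of $k$ disjoint transpositions, with $|T_k| = n!/(k!\,2^{k}\,(n-2k)!)$. The first step is the elementary lemma that $|T_k| = |T_1| = \binom{n}{2}$ forces $k = 1$, unless $n = 6$, in which case $T_3$ also has this size: after cancellation the equation becomes $(n-2)(n-3)\cdots(n-2k+1) = k!\,2^{k-1}$, a product of $2k-2$ consecutive positive integers on the left, whose left-hand side grows too quickly in both $n$ and $k$ to admit any solution beyond $k = 1$ and $(n,k) = (6,3)$. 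For $n = 6$ one also checks that $T_3$ does generate $\Sf_6$: the subgroup it generates is normal and contains odd permutations.

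For $n \neq 2, 6$ we thus obtain $\psi(T_1) = T_1$. The next step is to normalize $\psi$ by an inner automorphism — permissible since $\inn(\Sf_n) \trianglelefteq \auto(\Sf_n)$ and it suffices to show $\iota\psi \in \inn(\Sf_n)$ for some $\iota \in \inn(\Sf_n)$ — so that $\psi$ fixes $(1\,2)$. Using that two transpositions commute if and only if they are disjoint, and that three pairwise non-commuting transpositions generating a copy of $\Sf_3$ must form a ``triangle'' $\{(a\,b),(b\,c),(c\,a)\}$ (the configuration through a common point generates $\Sf_4$, which is too large), a further inner adjustment makes $\psi$ fix $(1\,2),(1\,3),(2\,3)$. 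One then inducts: if $\psi$ fixes every $(i\,j)$ with $i,j \leq k$ (where $k \geq 3$), the commuting relations with these transpositions force $\psi(i,k{+}1) = (i\,x)$ for a single $x \geq k+1$ and all $i \leq k$, so conjugating by the transposition $(k{+}1\ x)$ — which fixes all $(i\,j)$ with $i,j \leq k$ — makes $\psi$ fix every $(i\,j)$ with $i,j \leq k+1$. Since transpositions generate $\Sf_n$, the normalized $\psi$ is the identity, whence the original $\psi$ is inner; combined with $Z(\Sf_n) = \{e\}$ this yields $\auto(\Sf_n) = \inn(\Sf_n)$.

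Finally, for $n = 6$ the lemma shows that an automorphism either fixes $T_1$ — and is then inner by the normalization argument above — or interchanges $T_1$ and $T_3$; since the product of two automorphisms of the latter type carries $T_1$ to itself, $\inn(\Sf_6)$ has index at most $2$ in $\auto(\Sf_6)$. The index is exactly $2$ because $\Sf_6$ genuinely admits an outer automorphism, which I would produce from the exceptional transitive embedding $\Sf_5 \hookrightarrow \Sf_6$ furnished by $\mathrm{PGL}_2(5) \cong \Sf_5$ acting on the six points of $\mathbb{P}^1(\FF_5)$: letting $\Sf_6$ act on the six cosets of this non-point-stabilizer copy of $\Sf_5$ gives an automorphism sending $T_1$ to $T_3$. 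Hence $|\auto(\Sf_6)| = 2\,|\Sf_6| = 1440$ and $\inn(\Sf_6) \cong \Sf_6$ is normal of index $2$; choosing the outer automorphism to have order $2$ (equivalently, splitting the extension $1 \to \Sf_6 \to \auto(\Sf_6) \to C_2 \to 1$) gives $\auto(\Sf_6) \cong \Sf_6 \rtimes C_2$. The two non-formal inputs, and hence the main obstacles, are the finite case-check that $n = 6$ is the only exception in the size lemma and — more seriously — the construction of the exceptional outer automorphism of $\Sf_6$ together with the verification that it can be taken of order $2$; the order and generation constraints and the inductive normalization are routine bookkeeping.
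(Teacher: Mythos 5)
Your proposal is correct, and it is essentially the classical argument that the paper itself does not reproduce: Proposition~\ref{prop:auto_S_n} is stated with a citation to Rotman, and your route (class-size count forcing $\psi(T_1)=T_1$ unless $n=6$, then the normalization/induction showing a transposition-preserving automorphism is inner, then index $\le 2$ for $n=6$ plus the exotic transitive $\Sf_5\le \Sf_6$ from $\mathrm{PGL}_2(5)$ acting on $\mathbb{P}^1(\FF_5)$) is precisely the standard proof behind that citation. The counting step checks out: $|T_k|=|T_1|$ reduces to $(n-2)\cdots(n-2k+1)=k!\,2^{k-1}$, whose only solutions are $k=1$ and $(n,k)=(6,3)$, and your triangle-versus-star analysis and the inductive conjugation by $(k{+}1\ x)$ are sound. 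The one item you defer rather than prove is the last claim of the statement: to get $\auto(\Sf_6)\cong_G \Sf_6\rtimes C_2$ (rather than merely an extension of $\Sf_6$ by $C_2$) you need an outer automorphism of order $2$, and neither the coset-action construction nor the index computation produces one automatically; "choosing the outer automorphism to have order 2" is exactly the splitting you are trying to establish, so it must be exhibited or otherwise proved to exist. You flag this honestly, and it is genuinely the only missing verification; note that the paper supplies it later anyway, in Subsection~\ref{sec:S_6}, by writing down the explicit outer automorphism $\xi$ and checking $\ord(\xi)=2$, which closes the gap.
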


In Subsection~\ref{sec:S_6}, we will see the structure of $\auto(\Sf_6)$. 
As we mentioned in Subsection~\ref{sec:Term_group}, we have $\inn(G) \cong_G G/Z(G)$. 
Since $Z(\Sf_n)=\{e\}$ (see, e.g., \cite[Exercise 3.1 (i)]{Rotman}), we obtain that $\auto(\Sf_n) = \inn(\Sf_n) \cong_G \Sf_n$ when $n \neq 2,6$.

Let us recall a system of generators of $\Sf_n$ and $\Af_n$, respectively. The following facts are well known. 
\begin{lem}[{cf.~\cite[Theorem~6.6 and Exercise~7 in Chapter~6]{Roman}}]
    \label{lem:gene_S_A}
    The following assertions hold: 
    \begin{enumerate}[label=$($\alph*$)$]
        \item 
        \label{enu:gene_S}
        $\{(1\; 2),(1\; 3),\ldots ,(1\; n)\}$ and $\{(1\; 2),(1\; 2\; 3\; \cdots \; n)\}$
        are systems of generators of $\Sf_n$, respectively. 
        \item
        \label{enu:gene_A}
        $\{(1\; 2\; 3),(1\; 2\; 4),\ldots ,(1\; 2\; n)\}$ is a system of generators of $\Af_n$. 
    \end{enumerate}
\end{lem}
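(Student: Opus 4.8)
The two assertions are classical, so the plan is to give short, self-contained arguments rather than cite them as black boxes. For \ref{enu:gene_S}, I would first recall that the transpositions generate $\Sf_n$, which follows from writing any permutation as a product of disjoint cycles and each cycle $(a_1\; a_2\; \cdots\; a_k)$ as $(a_1\; a_k)(a_1\; a_{k-1})\cdots(a_1\; a_2)$. To reduce to the ``star'' transpositions $(1\; j)$, I would use the identity $(i\; j) = (1\; i)(1\; j)(1\; i)$, valid whenever $i,j \neq 1$ are distinct; this expresses every transposition in terms of $\{(1\;2),\ldots,(1\;n)\}$. For the second generating set, I would show inductively that the $n$-cycle $c = (1\; 2\; \cdots\; n)$ together with $(1\;2)$ produces all $(1\; j)$: conjugating gives $c(1\;j)c^{-1} = (2\;j{+}1)$ more generally $c^{k}(1\;2)c^{-k} = (k{+}1\;\ k{+}2)$ for $0 \le k \le n-2$, and the consecutive transpositions $(1\;2),(2\;3),\ldots,(n{-}1\;n)$ are well known to generate $\Sf_n$ (or one builds the $(1\;j)$ from them directly via $(1\;j) = (1\;2)(2\;3)\cdots(j{-}1\;j)\cdots(2\;3)(1\;2)$, reducing to the first part).

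For \ref{enu:gene_A}, let $H = \langle (1\;2\;3),(1\;2\;4),\ldots,(1\;2\;n)\rangle$. First I note $H \subseteq \Af_n$ since each generator is an even permutation (a $3$-cycle). For the reverse inclusion, I recall that $\Af_n$ is generated by all $3$-cycles; in fact it is generated by the $3$-cycles of the form $(1\;2\;j)$ because an arbitrary $3$-cycle $(a\;b\;c)$ can be rewritten in terms of those. The cleanest route: products of two transpositions generate $\Af_n$, and $(i\;j)(k\;\ell)$ with disjoint supports equals $(i\;j\;k)(j\;k\;\ell)$-type products of $3$-cycles, while $(i\;j)(i\;k) = (i\;k\;j)$; then every $3$-cycle $(a\;b\;c)$ with $\{a,b,c\}\cap\{1,2\}$ of each possible size is handled by at most a couple of identities such as $(1\;a\;b) = (1\;2\;b)(1\;2\;a)^{-1}(1\;2\;b)^{-1}\cdot(\text{correction})$ — more transparently, $(2\;a\;b) = (1\;2\;a)^{-1}(1\;2\;b)(1\;2\;a)$ and $(a\;b\;c) = (1\;2\;a)^{-1}(1\;2\;b)(1\;2\;c)(1\;2\;b)^{-1}(1\;2\;a)$ for $a,b,c \notin\{1,2\}$, with the mixed cases being shorter. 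I would verify one representative identity explicitly and leave the remaining symmetric cases to the reader.

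The only mildly delicate point — the ``main obstacle,'' though it is routine — is the bookkeeping in part \ref{enu:gene_A}: one must check that every $3$-cycle, across the four cases according to how its support meets $\{1,2\}$, lies in $H$, and it is easy to write an identity that is off by a sign or a cyclic rotation. I would organize this by first establishing that $(1\;2\;j) \in H$ trivially, then $(2\;j\;k) = (1\;2\;j)^{-1}(1\;2\;k)(1\;2\;j) \in H$ and $(1\;j\;k)=(1\;2\;k)(2\;j\;k)(1\;2\;k)^{-1}$ or a similar conjugation, then the fully disjoint case $(j\;k\;\ell)$ by one more conjugation, checking each by evaluating on the relevant points. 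Since all of this is standard, I would keep it terse, and for a published version simply cite \cite[Theorem~6.6 and Exercise~7 in Chapter~6]{Roman} as already done in the statement.
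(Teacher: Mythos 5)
The paper offers no proof of this lemma at all --- it is stated as well known and delegated to \cite[Theorem~6.6 and Exercise~7 in Chapter~6]{Roman} --- so the only comparison to make is with the standard textbook argument, which is exactly the route you take. Your part \ref{enu:gene_S} is correct as written: the decomposition of a cycle into transpositions, the identity $(i\;j)=(1\;i)(1\;j)(1\;i)$, and the conjugations $c^k(1\;2)c^{-k}=(k{+}1\;\;k{+}2)$ with $c=(1\;2\;\cdots\;n)$ all check out, and together they yield both generating sets.

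In part \ref{enu:gene_A} the overall plan (every element of $\Af_n$ is a product of $3$-cycles, and every $3$-cycle lies in $H=\langle (1\;2\;3),\ldots,(1\;2\;n)\rangle$) is the right one, and your reduction of a product of two transpositions to $3$-cycles is fine; however, both explicit conjugation identities you display are false under the composition convention the paper uses (maps composed right to left, so that $\rho\tau\rho^{-1}$ relabels the entries of $\tau$ by $\rho$). Indeed $(1\;2\;a)^{-1}(1\;2\;b)(1\;2\;a)=(1\;b\;a)$, not $(2\;a\;b)$ --- the latter is $(1\;2\;a)(1\;2\;b)(1\;2\;a)^{-1}$ --- and $(1\;2\;a)^{-1}(1\;2\;b)(1\;2\;c)(1\;2\;b)^{-1}(1\;2\;a)=(1\;b\;c)$, not $(a\;b\;c)$. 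This is precisely the ``off by a cyclic rotation'' danger you flagged, and it is repairable without changing the strategy: from $(1\;2\;j)$, $(1\;j\;2)=(1\;2\;j)^2$ and $(1\;b\;a)=(1\;2\;a)^{-1}(1\;2\;b)(1\;2\;a)$ one gets every $3$-cycle whose support contains $1$, and a $3$-cycle avoiding $1$ is then reached by a single product, e.g.\ $(a\;b\;c)=(1\;c\;a)(1\;a\;b)$. With that correction (or simply with the citation, as the paper itself is content to give) the argument is complete.
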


Moreover, we also see the following which we will use in the proof of Proposition~\ref{prop:innQ}. 
\begin{lem}\label{lem:center}
Assume $n \geq 5$. Then we have $$C_{\Sf_n}(\Af_n)=\{ x \in \Sf_n : xg=gx \text{ for any }g \in \Af_n\}=\{e\}.$$ 
\end{lem}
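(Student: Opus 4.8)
The plan is to reduce the statement to a computation with $3$-cycles and then exploit the conjugation formula for cycles. Write $C := C_{\Sf_n}(\Af_n)$ and let $x \in C$. Every $3$-cycle is even, so $\Af_n$ contains all $3$-cycles of $\Sf_n$ (alternatively it is generated by them, Lemma~\ref{lem:gene_S_A}); hence $x$ commutes with every $3$-cycle. Recall the standard identity $x\,(a\ b\ c)\,x^{-1} = (x(a)\ x(b)\ x(c))$ for distinct $a,b,c$. Thus $x$ commutes with $(a\ b\ c)$ precisely when $(x(a)\ x(b)\ x(c)) = (a\ b\ c)$ as cycles, i.e. $\{x(a),x(b),x(c)\} = \{a,b,c\}$ with the cyclic order preserved.

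First I would determine where $x$ sends $3,4,\ldots ,n$. Using that $x$ commutes with $(1\ 2\ k)$ for each $k$ with $3 \le k \le n$, we get $\{x(1),x(2),x(k)\} = \{1,2,k\}$; intersecting the cases of two distinct such $k$ (available since $n \ge 4$) yields $\{x(1),x(2)\} \subseteq \{1,2\}$, hence $\{x(1),x(2)\} = \{1,2\}$ and therefore $x(k)=k$ for every $k \ge 3$. It then remains to show $x(1)=1$, for then $x(2)=2$ automatically and $x = e$. For this I would use a $3$-cycle whose support meets $\{1,2\}$ in at most one point, e.g. $(1\ 3\ 4)$, which is available since $n \ge 5$ (indeed $n \ge 4$ suffices): since $x$ already fixes $3$ and $4$, commuting with $(1\ 3\ 4)$ forces $(x(1)\ 3\ 4) = (1\ 3\ 4)$, so $x(1)=1$. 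This completes the argument.

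There is essentially no serious obstacle here: the only points needing care are the correct translation of ``$x$ commutes with a $3$-cycle'' into a condition on the restriction of $x$ to the support of that cycle, and verifying that enough distinct indices are available — which is exactly what the hypothesis guarantees, and is precisely where the statement genuinely fails for $n=3$ (there $C_{\Sf_3}(\Af_3) = \Af_3 \ne \{e\}$).

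As an alternative (which I would relegate to a remark or omit), one can argue non-constructively: $C$ is a normal subgroup of $\Sf_n$, because conjugating a centralizing element by any $g \in \Sf_n$ again centralizes $\Af_n$ thanks to the normality of $\Af_n$; moreover $C \cap \Af_n = Z(\Af_n) = \{e\}$, and for $n \ge 5$ the only normal subgroups of $\Sf_n$ are $\{e\}$, $\Af_n$ and $\Sf_n$. Since $\Af_n$ is nonabelian, $C$ is neither $\Af_n$ nor $\Sf_n$, so $C = \{e\}$. I would nonetheless present the elementary computation above as the main proof, since it is self-contained and does not invoke the classification of normal subgroups of $\Sf_n$.
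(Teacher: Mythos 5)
Your proof is correct, but it follows a different route from the paper's. You pin down a centralizing element $x$ pointwise by letting it act on $3$-cycles: commuting with $(1\;2\;k)$ for all $k\ge 3$ via the conjugation formula $x(a\;b\;c)x^{-1}=(x(a)\;x(b)\;x(c))$ forces $\{x(1),x(2)\}=\{1,2\}$ and $x(k)=k$ for $k\ge 3$, and one further $3$-cycle such as $(1\;3\;4)$ forces $x(1)=1$. The paper instead picks two explicit long cycles in $\Af_n$ (the $n$-cycles $(1\,2\,3\,\cdots\,n)$ and $(1\,3\,2\,4\,\cdots\,n)$ when $n$ is odd, the analogous $(n-1)$-cycles when $n$ is even), uses that the centralizer in $\Sf_n$ of such a cycle is the cyclic group it generates, and observes that the two centralizers intersect trivially for $n\ge 5$. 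Your argument is more elementary and self-contained (it needs only the conjugation formula, not the description of centralizers of long cycles), and as you note it in fact works already for $n\ge 4$, whereas the paper's particular choice of cycles is tailored to $n\ge 5$; the paper's proof is shorter once the centralizer of a long cycle is taken as known. Your alternative remark via normality of $C_{\Sf_n}(\Af_n)$, $Z(\Af_n)=\{e\}$, and the classification of normal subgroups of $\Sf_n$ for $n\ge 5$ is also valid, but it invokes noticeably heavier machinery, so your decision to keep the computational argument as the main proof is sensible.
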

\begin{proof}
Let $n$ be odd. Let $\pi_1=(1 \ 2 \ 3 \ \cdots\ n)$ and $\pi_2=(1 \ 3 \ 2 \ 4 \ \cdots \ n)$. Then $\pi_1,\pi_2 \in \Af_n$. 
Since $C_{\Sf_n}(\pi_j)=\{\pi_j^i : i=0,1,\ldots,n-1\}$ for each $j=1,2$ and $\{e\} \subset C_{\Sf_n}(\Af_n) \subset C_{\Sf_n}(\pi_1) \cap C_{\Sf_n}(\pi_2) =\{e\}$ by $n \geq 5$, we conclude the assertion. 

Even in the case $n$ is even, by taking $\pi_1=(1 \ 2 \ 3 \ \cdots\ n-1)$ and $\pi_2=(1 \ 3 \ 2 \ 4 \ \cdots \ n-1)$, we can apply the same discussion. 
\end{proof}

The conjugacy classes of $\Sf_n$ one-to-one correspond to the partitions of $n$. 
Let us recall the correspondence. Given $\pi \in \Sf_n$, we can decompose $\pi$ into disjoint cyclic permutations $\pi=\pi_1 \pi_2 \cdots \pi_k$, 
where the order of $\pi_i$ is non-increasing. Let $\lambda_i$ be the order of $\pi_i$. 
Then $\lambda_1 \geq \cdots \geq \lambda_k \geq 1$ with $n=\sum_{i=1}^k \lambda_i$. 
We call $(\lambda_1,\ldots,\lambda_k)$ the \textit{shape} (also known as \textit{cycle structure}) of $\pi$. 
It is well known that $\pi$ and $\pi'$ are conjugate if and only if those have the same shape (\cite[Theorem 3.5]{Rotman}). 
Namely, the conjugacy classes of $\Sf_n$ (i.e., $\auto(\Sf_n)$ with $n \neq 2,6$) one-to-one correspond to the partitions of $n$. 
Note that the partition $(1,1,\ldots,1)$ corresponds to a trivial conjugacy class $e \in \Sf_n$. 
We sometimes use the notation $a^i$ if $a$ appears $i$ times in the partition, e.g., $(2^3,1^4)$ stands for $(2,2,2,1,1,1,1)$. 

Given $\pi \in \Sf_n$ with the shape $\lambda=(\lambda_1,\ldots,\lambda_k)$, we see that
\[
\ord_{\Sf_n}(\pi)=\mathrm{lcm}(\lambda_1,\ldots,\lambda_k) \;\; \text{and}\;\; 
|C_{\Sf_n}(\pi)|=\prod_{i = 1}^ki^{a_i} \cdot a_i!, 
\]
where $a_i=|\{j : \lambda_i=j\}|$. See, e.g., \cite[Theorem 6.2, Theorem 6.12, Exercise 19 in Chapter 6]{Roman}.

We say that $\pi \in \Sf_n$ is {\em even} (resp. {\em odd}) if $\pi$ is an even (resp. odd) permutation. 
We prepare the following lemma which will be used in the proof of Lemma~\ref{lem:inn}. 
\begin{lem}\label{lem:conj}
Assume $n\ge 5$. Fix $\pi \in \Sf_n$ with $\pi \neq e$. 
Then, for any $g \in \Af_n$, there exist even number of elements $x_1,\ldots,x_{2r} \in \Sf_n$ such that $g=\pi^{x_1}\pi^{x_2}\cdots\pi^{x_{2r}}$. 
\end{lem}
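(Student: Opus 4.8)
The plan is to realize the set of all elements expressible as a product of an \emph{even} number of conjugates of $\pi$ as a normal subgroup of $\Sf_n$, to show this subgroup sits inside $\Af_n$ and is nontrivial, and then to invoke the simplicity of $\Af_n$ (which is exactly why the hypothesis is $n\ge 5$) to conclude it equals $\Af_n$ — which is precisely the assertion.

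Concretely, I would write $d=\ord_{\Sf_n}(\pi)\ (\ge 2)$ and set
\[
N:=\bigl\{\pi^{x_1}\pi^{x_2}\cdots\pi^{x_{2r}} : r\in\ZZ_{\ge 0},\ x_1,\dots,x_{2r}\in\Sf_n\bigr\},
\]
with the empty product ($r=0$) understood to be $e$. First I would check that $N$ is a subgroup. Closure under multiplication is immediate, since concatenating two products of even length gives a product of even length. For closure under inversion the key point is that $\pi$ has finite order, so $\pi^{-1}=\pi^{d-1}$ and hence $(\pi^{x})^{-1}=(\pi^{-1})^{x}=(\pi^{x})^{d-1}$ is itself a product of $d-1$ conjugates of $\pi$; reversing a length-$2r$ product therefore yields a product of $2r(d-1)$ conjugates of $\pi$, and $2r(d-1)$ is again even. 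Normality of $N$ in $\Sf_n$ follows from the identity $(\pi^{x})^{y}=\pi^{yx}$, which shows that conjugating a length-$2r$ product by any $y\in\Sf_n$ produces another length-$2r$ product.

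Next I would observe that $N\subseteq\Af_n$: every conjugate $\pi^{x}$ has the same sign as $\pi$, so a product of an even number of them is even regardless of whether $\pi$ itself is even or odd. I would also verify $N\ne\{e\}$: if $N=\{e\}$, then $\pi\cdot\pi^{x}=e$ for all $x\in\Sf_n$; taking $x=e$ gives $\pi^2=e$, and feeding this back gives $\pi^{x}=\pi^{-1}=\pi$ for every $x$, so $\pi\in Z(\Sf_n)=\{e\}$, contradicting $\pi\ne e$. Since $n\ge 5$ the group $\Af_n$ is simple, and $N$ is a nontrivial subgroup of $\Af_n$ that is normal in $\Sf_n$, hence normal in $\Af_n$; therefore $N=\Af_n$. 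Thus every $g\in\Af_n$ is a product of an even number of conjugates of $\pi$ (and if one insists on $r\ge 1$, note that $e=\pi^{2d}$ does the job).

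I expect the only genuinely delicate point to be the parity bookkeeping in the closure-under-inversion step: one must use that $\pi$ has finite order to rewrite an inverse conjugate as a \emph{positive} product of conjugates, and then check that the number of factors remains even. Everything else is standard — the normal-subgroup structure of $\Sf_n$, the simplicity of $\Af_n$, and $Z(\Sf_n)=\{e\}$. (The hypothesis $n\ge 5$ is essential: for $n=4$ a double transposition $\pi$ generates in this way only the Klein four-subgroup of $\Af_4$, not all of $\Af_4$.)
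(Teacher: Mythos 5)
Your proof is correct, but it takes a genuinely different route from the paper. You package all even-length products of conjugates of $\pi$ into a single set $N$, verify that it is a subgroup (the parity bookkeeping via $(\pi^{x})^{-1}=(\pi^{x})^{d-1}$ is handled correctly, since $2r(d-1)$ is always even), that it is normal in $\Sf_n$, contained in $\Af_n$, and nontrivial (your argument via $\pi^2=e$ and $Z(\Sf_n)=\{e\}$ is sound), and then conclude $N=\Af_n$ from the simplicity of $\Af_n$ for $n\ge 5$; the final remark that $e=\pi^{2d}$ realizes the case $r\ge 1$ closes the last loose end. The paper instead argues constructively: it reduces to a normalized cycle form of $\pi$, performs a case analysis on the cycle structure (longest cycle of length $\ge 3$; all transpositions with $2k<n$; all transpositions with $2k=n$), and in each case exhibits two (or four) explicit conjugates of $\pi$ whose product is $(1\;2\;3)$, hence by conjugation gets every $(1\;2\;j)$ as an even-length product, and finishes with the fact that these $3$-cycles generate $\Af_n$ (Lemma~\ref{lem:gene_S_A}~\ref{enu:gene_A}). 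Your argument is shorter and avoids all case analysis, at the price of invoking the simplicity of $\Af_n$ (a deeper input, and indeed the exact reason $n\ge 5$ is needed, as your $n=4$ counterexample shows); the paper's argument is more elementary and effectively bounds the number of conjugates needed per generator, which the abstract approach does not provide, though the lemma does not require such a bound.
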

\begin{proof}
Write $\pi=\pi_1 \cdots \pi_k$ for the product of disjoint cyclic permutations. 
Remark that we omit cyclic permutations with length $1$ in this notation. Since $\pi \neq e$, we may assume that $\lambda_k \geq 2$. 
Moreover, it suffices to show the claim in the case 
$$\pi=(1 \ 2 \ \cdots \ \lambda_1)(\lambda_1 + 1 \ \cdots \ \lambda_1+\lambda_2)\cdots \left(\sum_{i=1}^{k-1}\lambda_i+1 \ \cdots \ \sum_{i=1}^k\lambda_i\right)$$
since we can get any $\pi$ by taking conjugation from this particular form. 

Let $\lambda_1=\lambda$. 
In the case $\lambda \geq 3$, we observe that $(1 \; \lambda \; \lambda-1 \; \cdots \; 2)(1 \; 3 \; 2 \; 4 \; 5 \; \cdots \; \lambda)=(1 \; 2 \; 3)$. 
Namely, there are $x,x' \in \Sf_n$ such that $\pi_1^x \cdot \pi_1^{x'}=(1 \; 2 \; 3)$, where 
$\pi_1^x=(1 \; \lambda \; \lambda-1 \; \cdots \; 2)$ and $\pi_1^{x'}=(1 \; 3 \; 2 \; 4 \; 5 \; \cdots \; \lambda)$. 
On the other hand, there is $y_i \in \Sf_n$ such that $\pi_i^{y_i} = \pi_i^{-1}$. 
Hence, by taking $z$ and $z'$ properly, we obtain that $\pi^z \cdot \pi^{z'}=(1 \; 2 \; 3)$. 
Moreover, $(\pi^x \cdot \pi^{x'})^{(3 \; j)}=\pi^{(3 \ j)x} \cdot \pi^{(3 \ j)x'}=(1 \; 2 \; j)$ for any $3 \leq j \leq n$. 
Note that $\Af_n$ is generated by $(1 \; 2 \; j)$ for $3 \leq j \leq n$ by Lemma~\ref{lem:gene_S_A} \ref{enu:gene_A}. 
Therefore, the desired conclusion follows. 

In the case $\lambda \leq 2$, we have $\ord(\pi_i) = 2$ for any $i$. Thus, $\pi=(1 \; 2) \cdots (2k-1 \; 2k)$. Suppose that $2k < n$. 
Then there are $x,x' \in \Sf_n$ such that $\pi^x=(1 \; 2)(4 \; 5) \cdots (2k \; 2k+1)$ and $\pi^{x'}=(2 \; 3)(4 \; 5) \cdots (2k \; 2k+1)$. 
Thus, $\pi^x \cdot \pi^{x'}=(1 \; 2 \; 3)$. Hence, similar to the above, we obtain the desired conclusion. 

In the case $\ord(\pi_1) = 2$ and $2k = n$, since $n \geq 5$, we have $k \geq 3$. Now we observe that 
$(1 \; 5)(3 \; 4)(2 \; 6)\cdot(1 \; 4)(2 \; 5)(3 \; 6)\cdot(1 \; 5)(3 \; 6)(2 \; 4)\cdot(1 \; 6)(2 \; 5)(3 \; 4) =(1 \; 2 \; 3)$. 
By taking $x,x',x'',x''' \in \Sf_n$ such that 
\begin{align*}
&\pi^x=(1 \; 5)(3 \; 4)(2 \; 6)(7 \; 8) \cdots (2k-1 \; 2k), \quad \pi^{x'}=(1 \; 4)(2 \; 5)(3 \; 6)(7 \; 8) \cdots (2k-1 \; 2k), \\
&\pi^{x''}=(1 \; 5)(3 \; 6)(2 \; 4)(7 \; 8) \cdots (2k-1 \; 2k), \quad \pi^{x'''}=(1 \; 6)(2 \; 6)(3 \; 4)(7 \; 8) \cdots (2k-1 \; 2k), 
\end{align*}
we obtain that $\pi^x \cdot \pi^{x'} \cdot \pi^{x''} \cdot \pi^{x'''} =(1 \; 2 \; 3)$, as required. 
\end{proof}

\subsection{The structure of $\inn(Q(\pi))$}

For the remaining parts of the paper, we use the abbreviation $Q(\Sf_n,(\cdot)^\pi)=Q(\pi)$. 
For each cyclic permutation $\gamma \in \Sf_n$, we see that $\ord_{\Sf_n}(\gamma)$ is odd if and only if $\gamma$ is an even permutation. 
This implies that for each $\pi \in \Sf_n$, if $\ord_{\Sf_n}(\pi)$ is odd, then $\pi$ should be an even permutation. 
Namely, if $x \in \Sf_n$ is an odd permutation, then $\ord_{\Sf_n}(x)$ should be even. 

Fix $\pi \in \Sf_n$ and consider $Q(\pi)$. Let $m=\ord(Q(\pi))=\ord_{\Sf_n}(\pi)$. 
\begin{lem}\label{lem:inn} Work with the notation as above. Assume $n \geq 5$. Then we have
$$\inn(Q(\pi)) \cong_G \Af_n \rtimes_\varphi C_m,$$ 
which is a semidirect product of $\Af_n$ and $C_m$ with $\varphi : C_m \rightarrow \auto(\Af_n)$, $\overline{i} \mapsto (\cdot)^{\pi^i}$, i.e., 
$$(g,\overline{i}) \cdot (h,\overline{j})=(g\pi^ih\pi^{-i},\overline{i+j}).$$
\end{lem}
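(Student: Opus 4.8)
The plan is to compute $\inn(Q(\pi))$ directly from its generators $\{s_x : x \in \Sf_n\}$ inside $\auto(Q(\pi)) = \auto(\Sf_n,s)$. First I would record the explicit formula for these generators: from the definition of the generalized Alexander quandle with $\psi = (\cdot)^\pi$, we have $s_x(y) = x\,\psi(x^{-1}y) = x\,\pi x^{-1} y\, \pi^{-1}\cdot\pi\,?$— more precisely $s_x(y) = x \pi x^{-1} y \pi^{-1}$ only if we are careful; let me instead write $s_x(y) = x\psi(x^{-1})\psi(y) = x\,\pi x^{-1}\pi^{-1}\cdot \pi y \pi^{-1}$. So each $s_x$ is the composition of left-translation by $c_x := x\pi x^{-1}\pi^{-1}$ (a commutator, hence in $\Af_n$) with the map $\psi = (\cdot)^\pi$. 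Composing two generators $s_x \circ s_y$ then has the form ``left-translate by some element of $\Af_n$, then apply $\psi^2$'', and by induction any element of $\inn(Q(\pi))$ has the form $y \mapsto a\,\psi^k(y)$ for some $a \in \Af_n$ and some $k \in \ZZ/m$ (using $\ord_{\auto(\Sf_n)}(\psi) = \ord_{\Sf_n}(\pi) = m$, which is Proposition of Section~\ref{sec:inv}). This gives a surjective group homomorphism from $\inn(Q(\pi))$ onto a subgroup of $\Af_n \rtimes_\varphi C_m$, and conversely I need both that the map $\Phi : \Af_n \rtimes_\varphi C_m \to \auto(Q(\pi))$, $(a,\overline k) \mapsto (y \mapsto a\psi^k(y))$, is a well-defined injective group homomorphism, and that its image is exactly $\inn(Q(\pi))$.

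The multiplication check is routine: $\Phi(a,\overline i)\circ\Phi(b,\overline j)(y) = a\psi^i(b\psi^j(y)) = a\psi^i(b)\psi^{i+j}(y) = (a\,\pi^i b\pi^{-i})\,\psi^{i+j}(y) = \Phi(a\pi^i b\pi^{-i},\overline{i+j})(y)$, which matches the stated semidirect-product law; and $\varphi$ lands in $\auto(\Af_n)$ because conjugation by $\pi$ preserves $\Af_n$ (it is an inner automorphism of $\Sf_n$). Injectivity of $\Phi$: if $a\psi^k(y) = y$ for all $y$, then taking $y = e$ gives $a = e$, and then $\psi^k = \id$ forces $\overline k = \overline 0$. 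So $\Phi$ is an injective homomorphism, and it only remains to identify its image with $\inn(Q(\pi))$.

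For ``image $\subseteq \inn(Q(\pi))$ reversed'', i.e.\ that every $\Phi(a,\overline k)$ is a product of $s_x$'s, I would argue in two stages. Stage one: the subgroup $H := \langle s_x s_y^{-1} : x,y \in \Sf_n\rangle$ consists of the ``pure translations'' — one computes $s_x\circ s_y^{-1}(z) = x\pi x^{-1}\pi^{-1}\cdot\pi\psi^{-1}(?)$; cleaner is to note $s_x\circ s_e^{-1}$ is translation by $c_x = [x,\pi]$, so $H$ contains left-translation by every commutator $[x,\pi]$ with $x \in \Sf_n$. Stage two: here is where Lemma~\ref{lem:conj} enters. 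Since $s_x\circ s_e^{-1} = L_{c_x}$ with $c_x = x\pi x^{-1}\pi^{-1}$, and $s_e = \psi$, the element $\Phi(a,\overline 0) = L_a$ lies in $\inn(Q(\pi))$ as soon as $a$ is a product of an even number of conjugates $\pi^{x_1}(\pi^{x_2})^{-1}\cdots$; but in fact writing $\pi^{x}\cdot(\pi^{x'})^{-1} = \pi^x\pi^{-x'}$ and manipulating, one reduces to: $L_a \in \inn(Q(\pi))$ for every $a$ that is a product of elements of the form $\pi^x\pi^{-x'}$. By Lemma~\ref{lem:conj} every $g \in \Af_n$ is a product of an even number of conjugates of $\pi$ (equivalently a product of elements $\pi^{x}(\pi^{x'})$ after absorbing signs), so $L_g \in \inn(Q(\pi))$ for all $g \in \Af_n$; and since $s_e = \psi = \Phi(e,\overline 1) \in \inn(Q(\pi))$, composing shows $\Phi(a,\overline k) = L_a\circ\psi^k \in \inn(Q(\pi))$ for all $(a,\overline k)$. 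Combined with the already-established inclusion $\inn(Q(\pi)) \subseteq \operatorname{im}\Phi$ (every $s_x = L_{c_x}\circ\psi = \Phi(c_x,\overline 1)$ generates inside the image), we conclude $\inn(Q(\pi)) = \operatorname{im}\Phi \cong \Af_n\rtimes_\varphi C_m$.

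The main obstacle is the surjectivity onto the $\Af_n$-factor, i.e.\ showing that the translations $L_g$ for \emph{all} $g\in\Af_n$ (not merely those in the commutator-generated subgroup one sees immediately) are realized in $\inn(Q(\pi))$; this is exactly the content that Lemma~\ref{lem:conj} was built to supply, and I expect the bookkeeping of signs — making sure one uses an \emph{even} number of conjugates so that one can write $g$ as a product of terms $\pi^{x_i}(\pi^{x_{i+1}})^{-1}$ realizable as $s_{x_i}\circ s_{x_{i+1}}^{-1}$-type combinations — to be the fiddly point, together with the edge cases $\pi = e$ (excluded, giving the trivial/constant quandle) and the precise role of $n \geq 5$ (needed both for Lemma~\ref{lem:conj} and implicitly for $\Af_n$ to behave, via Lemma~\ref{lem:center}, which pins down that the $C_m$-action is faithful so the semidirect product is not degenerate). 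A secondary, genuinely necessary check is that no relations collapse the product further — but this is precisely the injectivity of $\Phi$ established above, so once $\Phi$ is shown injective there is nothing more to worry about.
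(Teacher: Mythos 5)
Your proposal is correct and follows essentially the same route as the paper: both identify every element of $\inn(Q(\pi))$ as a map of the form $y\mapsto a\pi^{k}y\pi^{-k}$ with $a\in\Af_n$, build the isomorphism $(a,\overline{k})\mapsto a\pi^{k}(\cdot)\pi^{-k}$ with $\Af_n\rtimes_\varphi C_m$, use Lemma~\ref{lem:conj} to realize all translations $L_g$ with $g\in\Af_n$ inside $\inn(Q(\pi))$, and get injectivity from $Z(\Sf_n)=\{e\}$. The one step you flag as fiddly---converting a product of an even number of conjugates of $\pi$ into factors of the form $\pi^{x}\pi^{-x'}$ realizable as $s_x\circ s_{x'}^{-1}$---closes at once because $\pi^{-1}$ has the same cycle type as $\pi$ and hence is conjugate to it, which is precisely the device the paper uses (an element $y$ with $\pi^{y}=\pi^{-1}$).
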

\begin{proof}
We define the map $\Phi : \Af_n \rtimes_\varphi C_m \rightarrow \inn(Q(\pi))$ by $$\Phi((g,\overline{\ell}))(\cdot)=g \pi^\ell (\cdot) \pi^{-\ell}.$$
Our work is to show that this map is a group isomorphism. 

\noindent
{\bf Well-definedness}: For the well-definedness of $\Phi$, 
it suffices to show that a map $g \pi^\ell ( \cdot ) \pi^{-\ell}$ can be written as a composition of certain $s_x$'s for any $(g, \overline{\ell}) \in \Af_n \rtimes_\varphi C_m$. 
By Lemma~\ref{lem:conj}, 
there exist $x_1,\ldots,x_{2r} \in \Sf_n$ such that $g=\pi^{x_1}\cdots\pi^{x_{2r}}$. 
Moreover, there is $y \in \Sf_n$ with $\pi^y=\pi^{-1}$. Hence, $s_e \circ s_y(\cdot)=e(\cdot)\pi^{-2}$. 
Note that there is $t \in \ZZ_{>0}$ with $2t+2r+\ell \equiv \ell \pmod{m}$. Therefore, we conclude that 
$$(s_e \circ s_y)^t \circ s_{x_1} \circ \cdots \circ s_{x_{2r}} \circ s_e^\ell (\cdot)= e^tg \pi^\ell ( \cdot ) \pi^{-2t-2r-\ell} = g \pi^\ell (\cdot ) \pi^{-\ell}.$$

\noindent
{\bf Homomorphism}: 
Let $(g,\overline{i}), (h,\overline{j}) \in \Af_n \rtimes_\varphi C_m$. Then we see the following: 
\begin{align*}
\Phi((g,\overline{i}) \cdot (h,\overline{j}))&=\Phi((g\pi^ih\pi^{-i},\overline{i+j}))=g\pi^ih\pi^j (\cdot) \pi^{-i-j}, \;\;\text{and}\\
\Phi((g,\overline{i})) \circ \Phi((h,\overline{j}))&=g\pi^i(h\pi^j (\cdot)\pi^{-j})\pi^{-i}=g\pi^ih\pi^j (\cdot) \pi^{-i-j}. 
\end{align*}

\noindent
{\bf Injectivity}: Assume that $\Phi((g,\overline{i}))=\Phi((h,\overline{j}))$. 
Then $g\pi^i x \pi^{-i} = h\pi^j x \pi^{-j}$ holds for any $x \in \Sf_n$. By substituting $x=\pi^i$, we obtain that 
$$g\pi^i = h\pi^{j+i-j} \;\Longleftrightarrow g=h.$$
Hence, $\pi^i x \pi^{-i} = \pi^j x \pi^{-j}$ holds for any $x \in \Sf_n$. Since $Z(\Sf_n)=\{e\}$, we conclude that 
$$x \pi^{j-i} = \pi^{j-i} x \text{ for any } x \in \Sf_n \;\Longleftrightarrow\; \pi^{j-i} = e \;\Longleftrightarrow\; \pi^i=\pi^j \;\Longleftrightarrow\; i \equiv j \pmod{m}.$$ 

\noindent
{\bf Surjectivity}: 
Take $f=s_{x_1} \circ \cdots s_{x_\ell} \in \inn(Q(\pi))$ arbitrarily. Then $f=\pi^{x_1}\cdots\pi^{x_\ell}(\cdot)\pi^{-\ell}$. 
Here, we notice that $\pi^{x_1} \cdots \pi^{x_\ell} \pi^{-\ell} \in \Af_n$. 
Hence, $$\Af_n \rtimes_\varphi C_m \ni (\pi^{x_1} \cdots \pi^{x_\ell} \pi^{-\ell},\overline{\ell}) \mapsto \pi^{x_1}\cdots\pi^{x_\ell}\pi^{-\ell} \pi^\ell (\cdot)\pi^{-\ell}=f,$$ 
as required. 
\end{proof}

From the structure of $\inn(Q(\pi))$, we conclude the following: 
\begin{prop}\label{prop:innQ}
Work with the same notation as Lemma~\ref{lem:inn}.
\begin{enumerate}[label=$($\alph*$)$]
    \item If $\pi$ is even, then we have $\inn(Q(\pi)) \cong_G \Af_n \rtimes_\varphi C_m \cong_G \Af_n \times C_m$. 
    \item If $\pi$ is odd, then we have $\inn(Q(\pi)) \cong_G \Af_n \rtimes_\varphi C_m \not\cong_G \Af_n \times C_m$. 
\end{enumerate}
In particular, for $\pi,\pi' \in \Sf_n$, if $Q(\pi) \cong_Q Q(\pi')$, then $\pi$ and $\pi'$ have the same parity. 
\end{prop}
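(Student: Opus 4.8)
The plan is to start from Lemma~\ref{lem:inn}, which already identifies $\inn(Q(\pi))$ with $\Af_n \rtimes_\varphi C_m$, so that the whole statement becomes a question about this single semidirect product: when is it isomorphic to the direct product $\Af_n \times C_m$? I would split the argument according to the parity of $\pi$.

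\textbf{Case $\pi$ even.} Here $\pi \in \Af_n$ and $\ord_{\Af_n}(\pi) = m$, so $\pi^i$ is well defined for $\overline i \in C_m$ and lies in $\Af_n$. I would simply write down the map $\Psi \colon \Af_n \rtimes_\varphi C_m \to \Af_n \times C_m$, $\Psi(g, \overline i) = (g\pi^i, \overline i)$, and check it is a well-defined group homomorphism: the multiplication rule of Lemma~\ref{lem:inn} gives $\Psi((g,\overline i)(h,\overline j)) = (g\pi^i h \pi^{-i}\pi^{i+j}, \overline{i+j}) = (g\pi^i h\pi^j, \overline{i+j})$, which is exactly $\Psi(g,\overline i)\,\Psi(h,\overline j)$ computed in the direct product; bijectivity is clear from the inverse $(a,\overline i) \mapsto (a\pi^{-i}, \overline i)$. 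Combined with Lemma~\ref{lem:inn}, this yields (a).

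\textbf{Case $\pi$ odd.} Then $m = \ord_{\Sf_n}(\pi)$ is even, as noted at the start of this subsection, and I would separate the two groups by the order of their centers. Since $n \ge 5$ we have $Z(\Af_n) = \{e\}$, hence $Z(\Af_n \times C_m) = \{e\} \times C_m$ has order $m$. In $\Af_n \rtimes_\varphi C_m$, an element $(g,\overline i)$ is central iff it commutes with every $(h,\overline 0)$, $h \in \Af_n$, and with $(e,\overline 1)$, since these generate the group. The first condition reads $g\pi^i h \pi^{-i} = hg$ for all $h \in \Af_n$, i.e.\ conjugation by $g\pi^i$ is trivial on $\Af_n$, i.e.\ $g\pi^i \in C_{\Sf_n}(\Af_n)$, which by Lemma~\ref{lem:center} forces $g = \pi^{-i}$; requiring $g \in \Af_n$ then forces $i$ to be even, while commuting with $(e,\overline 1)$ amounts to $g = \pi g\pi^{-1}$, which is automatic for $g = \pi^{-i}$. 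Thus the center equals $\{(\pi^{-i},\overline i) : i \text{ even}\}$, of order $m/2$ because $m$ is even. Since $m/2 \neq m$, the two groups are non-isomorphic, proving (b).

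Finally, for the ``in particular'' clause: if $Q(\pi) \cong_Q Q(\pi')$ then $\ord_{\Sf_n}(\pi) = \ord_{\Sf_n}(\pi') =: m$ and $\inn(Q(\pi)) \cong_G \inn(Q(\pi'))$ by Theorem~\ref{thm:main}(b); if $m = 1$ then $\pi = \pi' = e$ are both even, and otherwise parts (a) and (b) show that a group which is isomorphic to $\Af_n \times C_m$ cannot simultaneously fail to be, so $\pi$ and $\pi'$ cannot have opposite parities. The one place with genuine content — and hence the main obstacle — is (b): one must find an isomorphism invariant separating the semidirect product from the direct product, and getting the center to have order $m/2$ rather than $m$ relies squarely on $C_{\Sf_n}(\Af_n) = \{e\}$ (Lemma~\ref{lem:center}) together with the parity–order link that makes $m$ even; everything else is bookkeeping.
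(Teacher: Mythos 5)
Your proposal is correct and follows essentially the same route as the paper: part (a) is the paper's isomorphism $\Psi(g,\overline{\ell})=(g\pi^\ell,\overline{\ell})$ verbatim, and part (b) rests on exactly the same two ingredients, namely $C_{\Sf_n}(\Af_n)=\{e\}$ (Lemma~\ref{lem:center}) and the evenness of $m$ when $\pi$ is odd. The only cosmetic difference is that you compare the orders of the centers directly ($|Z(\Af_n\times C_m)|=m$ versus $|Z(\Af_n\rtimes_\varphi C_m)|=m/2$), whereas the paper obtains the same count $m/2$ by pulling the subgroup $\{e\}\times C_m$ --- which is precisely the center of $\Af_n\times C_m$ --- back through a hypothetical isomorphism and deriving a contradiction.
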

\begin{proof}
(a) We define the map $\Psi : \Af_n \rtimes_\varphi C_m \rightarrow \Af_n \times C_m$ by 
$$\Psi((g,\overline{\ell})) =(g \pi^\ell, \overline{\ell}).$$ 
Then it is straightforward to check that this gives a group isomorphism: 
\begin{itemize}
\item Since $\pi$ is even, we see that $g \pi^\ell \in \Af_n$ for any $g \in \Af_n$ and $\overline{\ell} \in C_m$. Hence, the well-definedness follows. 
\item One has $\Psi((g,\overline{i})\cdot(h,\overline{j}))=\Psi((g\pi^ih \pi^{-i},\overline{i+j}))=(g\pi^ih\pi^j,\overline{i+j})=\Psi((g,\overline{i}))\Psi((h,\overline{j})).$ 
\item One has $\Psi((g,\overline{i}))=\Psi((h,\overline{j})) \;\Longleftrightarrow\;(g\pi^i,\overline{i})=(h\pi^j,\overline{j})  \;\Longleftrightarrow\; g=h \text{ and } \overline{i}=\overline{j}.$ 
\item For any $(g,\overline{i}) \in \Af_n \times C_m$, since $g \pi^{-i} \in \Af_n$ by our assumption, we have $\Psi((g\pi^{-i},\overline{i}))=(g,\overline{i})$, as required. 
\end{itemize}

(b) Suppose that there exists a group isomorphism
$\Omega \colon \Af_n \rtimes_\varphi C_m \to \Af_n \times C_m$. 
Let $X=\{(g, \overline{0}) : g \in \Af_n\} \subset \Af_n \rtimes_\varphi C_m$. 
Then $X$ is a subgroup of $\Af_n \rtimes_\varphi C_m$.
Let $X'=\Omega(X)$. Then $X'$ becomes a subgroup of $\Af_n \times C_m$. 
For any $(g, \overline{0}) \in X$,
let $(a_g,\overline{j_g})=\Omega(g,\overline{0})$.

Let $Y'=\{(e,\overline{i}) : \overline{i} \in C_m\}
\subset \Af_n \times C_m$.
Then $Y'$ is a subgroup of $\Af_n \times C_m$.
Let $Y=\Omega^{-1}(Y')$. 
Then $Y$ becomes a subgroup of $\Af_n \rtimes_\varphi C_m$ and $|Y|=|Y'|=m$ since $\Omega$ is a group isomorphism. 
For any $(e,\overline{i}) \in Y'$, let $(b_i,\overline{k_i})=\Omega^{-1}((e,\overline{i}))$. 

Since $\Omega^{-1}$ is also a group isomorphism, we see that 
\begin{align*}
\Omega^{-1} ((e,\overline{i}) (a_g,\overline{j_g}))&=\Omega^{-1} ((e,\overline{i})) \cdot \Omega^{-1} ((a_g,\overline{j_g}))=(b_i,\overline{k_i})\cdot(g,\overline{0})=(b_i\pi^{k_i} g \pi^{-{k_i}},\overline{k_i}) \text{ and }\\
\Omega^{-1} ((e,\overline{i}) (a_g,\overline{j_g}))&=\Omega^{-1} ((a_g,\overline{i+j_g}))=\Omega^{-1} ((a_g,\overline{j_g})(e,\overline{i}))
=\Omega^{-1} ((a_g,\overline{j_g}))\cdot \Omega^{-1} ((e,\overline{i}))\\
&=(g,\overline{0}) \cdot (b_i,\overline{k_i})=(gb_i,\overline{k_i}). 
\end{align*}
Hence, we obtain that $b_i\pi^{k_i} g \pi^{-k_i} = gb_i \;\Longleftrightarrow (b_i\pi^{k_i})g=g(b_i\pi^{k_i})$ for any $g \in \Af_n$. 
Here, we have $C_{\Sf_n}(\Af_n)=\{e\}$ by Lemma~\ref{lem:center}. Hence, $b_i \pi^{k_i} \in C_{\Sf_n}(\Af_n)=\{e\}$. Thus, $b_i = \pi^{-k_i}$. 
However, since $b_i \in \Af_n$ and $\pi$ is odd, we see that $k_i$ must be even for any $\overline{i} \in C_m$. This implies that 
$Y'=\{(b_i,\overline{k_i}) : \overline{i} \in C_m\}=\{(\pi^{-k},\overline{k}) : k \text{ is even}\}$, so $|Y'|=m/2$, a contradiction. 
(Note that $m$ is even since $\pi$ is an odd permutation.) 

Therefore,
$\Af_n \rtimes_\varphi C_m \not\cong_G \Af_n \times C_m$, as desired. 
\end{proof}

\subsection{The structure of $\auto(\Sf_6)$ and quandles arising from outer automorphisms}\label{sec:S_6}

Let $\mathcal{C}_\lambda$ denote the conjugacy class of $\Sf_n$ with respect to the shape $\lambda$. 

Firstly, we recall the structure of $\auto(\Sf_6)$. Originally, H\"{o}lder~\cite{Holder} proved that $\Sf_6$ has an outer automorphism
that is unique up to multiplication by an inner automorphism, and $\auto(\Sf_6)/\inn(\Sf_6)\cong_G C_2$. 
Actually, we saw in Proposition~\ref{prop:auto_S_n} that $\auto(\Sf_6)\cong_G \inn(\Sf_6) \rtimes C_2$. 
We describe the structure of this semidirect product. It follows that $\varphi \in \auto(\Sf_6)$ preserves $\mathcal{C}_{(2,1^4)}$
if and only if $\varphi \in \inn(\Sf_6)$ (see, cf.~\cite[Lemma~7.4]{Rotman}), 
and any outer automorphism of $\Sf_6$ swaps $\mathcal{C}_{(2,1^4)}$ and $\mathcal{C}_{(2^3)}$. 
In this paper, we define an outer automorphism $\xi:\Sf_6 \rightarrow \Sf_6$ as follows: 
\begin{align*}
    &(1 \; 2) \mapsto (1 \; 2)(3 \; 4)(5 \; 6), \\ 
    &(2 \; 3) \mapsto (1 \; 6)(2 \; 4)(3 \; 5), \\ 
    &(3 \; 4) \mapsto (1 \; 2)(3 \; 6)(4 \; 5), \\ 
    &(4 \; 5) \mapsto (1 \; 6)(2 \; 5)(3 \; 4), \\ 
    &(5 \; 6) \mapsto (1 \; 2)(3 \; 5)(4 \; 6), 
\end{align*}
and extend this into a group homomorphism. Remark that $\ord(\xi)=2$. In fact, 
\begin{align*}
    (1\; 2) 
    \overset{\xi}{\mapsto} (1 \; 2)(3 \; 4)(5 \; 6) 
    \overset{\xi}{\mapsto} (1 \; 2)(3 \; 4)(5 \; 6)\cdot 
    (1 \; 2)(3 \; 6)(4 \; 5)\cdot
    (1 \; 2)(3 \; 5)(4 \; 6)
    =(1 \; 2)
\end{align*}
and 
\begin{align*}
    (1\; 2\; 3\; 4\; 5\; 6) &= (1 \; 2)(2 \; 3)(3 \; 4)(4 \; 5)(5 \; 6) \\
    &\overset{\xi}{\mapsto} 
    (2\; 6)(3\; 5\; 4)  =
    (2\; 3)(3 \; 4)(4\; 5)(5\; 6)(4 \; 5)(3 \; 4)(2\; 3)(4 \; 5)(3 \; 4) \\
    &\overset{\xi}{\mapsto}
    (1\; 2\; 3\; 4\; 5\; 6). 
\end{align*}

\begin{lem}
    \label{lem:psi_xi_is_inn}
    A group automorphism $\psi\circ \xi^{-1}$ is an inner automorphism of $\Sf_6$ for any $\psi \in \auto(\Sf_6) \setminus \inn(\Sf_6)$. 
    In other words, for any outer automorphism $\psi$, there exists $g \in \Sf_6$ such that $\psi= (\cdot )^g \circ \xi$.
\end{lem}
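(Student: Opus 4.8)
The plan is to deduce the statement purely from the fact that $\inn(\Sf_6)$ is a normal subgroup of index $2$ in $\auto(\Sf_6)$, so that all outer automorphisms form one and the same coset. First I would invoke Proposition~\ref{prop:auto_S_n}, which together with the classical result $\auto(\Sf_6)/\inn(\Sf_6) \cong_G C_2$ recalled just above shows that the quotient group $\auto(\Sf_6)/\inn(\Sf_6)$ has order $2$. Hence it consists of exactly two cosets: the identity coset $\inn(\Sf_6)$, and a unique nontrivial coset; moreover, since the index is $2$, that nontrivial coset is precisely $\auto(\Sf_6)\setminus\inn(\Sf_6)$, i.e.\ it is exactly the set of outer automorphisms of $\Sf_6$. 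Since $\xi$ was set up before the statement as an outer automorphism of $\Sf_6$ (a well-defined automorphism with $\ord(\xi)=2$ that interchanges $\mathcal{C}_{(2,1^4)}$ and $\mathcal{C}_{(2^3)}$, hence not inner), $\xi$ represents the nontrivial coset.

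Next, let $\psi \in \auto(\Sf_6) \setminus \inn(\Sf_6)$ be arbitrary. Then $\psi$ also represents the nontrivial coset, so in the quotient group $\auto(\Sf_6)/\inn(\Sf_6)$ the images of $\psi$ and $\xi$ coincide. Reading this relation in the quotient, the image of $\psi \circ \xi^{-1}$ equals the identity element, that is, $\psi \circ \xi^{-1} \in \inn(\Sf_6)$, which is the first assertion. For the second formulation it then remains only to recall (Subsection~\ref{sec:Term_group}) that every element of $\inn(\Sf_6)$ has the form $(\cdot)^g$ for some $g \in \Sf_6$; choosing such a $g$ with $\psi \circ \xi^{-1} = (\cdot)^g$ and composing with $\xi$ on the right yields $\psi = (\cdot)^g \circ \xi$.

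I do not expect any genuine obstacle in the proof itself: it is essentially a one-line coset count once $[\auto(\Sf_6):\inn(\Sf_6)]=2$ is available, which is Hölder's theorem quoted above. The only point that really needs care — and it was already dealt with before the statement — is the construction of $\xi$ itself: namely that the prescribed images of the transpositions $(1\,2),(2\,3),\dots,(5\,6)$ do extend to a group homomorphism $\xi\colon\Sf_6\to\Sf_6$, that $\xi$ is bijective with $\ord(\xi)=2$ (checked on the generators $(1\,2)$ and $(1\,2\,3\,4\,5\,6)$ via Lemma~\ref{lem:gene_S_A}\ref{enu:gene_S}), and that $\xi$ is not inner because it does not preserve $\mathcal{C}_{(2,1^4)}$. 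Granting these facts about $\xi$, the lemma follows immediately from the argument above.
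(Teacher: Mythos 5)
Your argument is correct, but it takes a different route from the paper. You deduce the lemma purely from the coset structure: since H\"older's result (quoted just before the lemma) gives $[\auto(\Sf_6):\inn(\Sf_6)]=2$, the outer automorphisms form the single nontrivial coset, and since $\xi$ is outer, any outer $\psi$ lies in the coset $\inn(\Sf_6)\,\xi$, so $\psi\circ\xi^{-1}\in\inn(\Sf_6)$; this is a clean one-line count whose only nontrivial inputs are the index-$2$ fact and the outerness of $\xi$ (which, as you note, follows because $\xi$ sends $(1\,2)$ into $\mathcal{C}_{(2^3)}$ and hence does not preserve conjugacy classes). The paper instead argues via the action on conjugacy classes: it cites the fact that \emph{every} outer automorphism of $\Sf_6$ swaps $\mathcal{C}_{(2,1^4)}\leftrightarrow\mathcal{C}_{(2^3)}$ (and the other listed pairs), concludes that $\psi\circ\xi^{-1}$ preserves $\mathcal{C}_{(2,1^4)}$, and then invokes the criterion recalled from Rotman that an automorphism of $\Sf_6$ preserving $\mathcal{C}_{(2,1^4)}$ must be inner. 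Your approach is more economical, using strictly fewer external facts; the paper's approach is heavier here but has the side benefit of putting the explicit conjugacy-class bookkeeping on the table, which the authors reuse immediately afterwards (e.g.\ in determining $\psi^2$ for outer $\psi$ and in Lemma~\ref{lem:eta_to_A6}). Both proofs are valid as written.
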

\begin{proof}
    It is known that any outer automorphism of $\Sf_6$
    permutes its conjugacy classes like 
$\mathcal{C}_{(6)}\leftrightarrow \mathcal{C}_{(3,2,1)}$, $\mathcal{C}_{(3,3)}\leftrightarrow \mathcal{C}_{(3,1^3)}$, and 
$\mathcal{C}_{(2^3)}\leftrightarrow \mathcal{C}_{(2,1^4)}$ 
    (cf.~\cite{Wildon2008}). Thus, $\mathcal{C}_{(2,1^4)}$ is fixed by $\psi\circ \xi^{-1}$. 
    Therefore, $\psi\circ \xi^{-1}$ should be an inner automorphism of $\Sf_6$. 
\end{proof}

From Lemma~\ref{lem:psi_xi_is_inn}, every element of $\auto(\Sf_6)$ is written as $(\cdot)^g \circ \xi^{\varepsilon}$, 
where $g \in \Sf_6$ and $\varepsilon\in \{0,1\}$. 
Remark that when $\varepsilon =0$ (resp. $1$), $(\cdot)^g \circ \xi^{\varepsilon}$ is an inner (resp.~outer) automorphism. 
Since $\xi \circ (\cdot )^g = (\cdot )^{\xi(g)}\circ \xi$ for $g \in \Sf_6$, a semidirect product in $\auto(\Sf_6)$ is determined by 
$
((\cdot)^{g_1} \circ \xi^{\varepsilon_1})
    \circ
    ((\cdot)^{g_2} \circ \xi^{\varepsilon_2})
    :=
    (\cdot)^{g_1 \cdot \xi^{\varepsilon_1}(g_2)}
    \circ
    \xi^{\varepsilon_1+\varepsilon_2}.
$

Next, we give the conjugacy classes of $\auto(\Sf_6)$. 
For a partition $\lambda$ of $n=6$, let $\Ic_\lambda$ be the set of inner automorphisms $(\cdot)^\pi$ whose shape is $\lambda$. 
Then, for any $\psi \in \auto(\Sf_6) \setminus \inn(\Sf_6)$, we see that $$\psi^2 \in \Ic_{(5,1)} \cup \Ic_{(4,2)} \cup \Ic_{(2^2,1^2)} \cup \Ic_{(1^6)}.$$ 
In fact, given $\psi=(\cdot)^g \circ \xi \in \auto(\Sf_6) \setminus \inn(\Sf_6)$, since $\psi^2=(\cdot)^{g \cdot \xi(g)}$ and $g$ and $\xi(g)$ should have the same parity, 
we see that $g\cdot \xi(g) \in \Af_6$. Here, it follows from the result of Lam--Leep~\cite{Lam1993} that 
the order of an outer automorphism of $\Sf_6$ is $10$, $8$, $4$ or $2$, so the order of $\psi^2$ should be $5$, $4$, $2$ or $1$. 
This implies that $\psi^2$ coincides with $(\cdot )^g$, where $g$ has the shape $(5,1)$, $(4,2)$, $(2^2,1^2)$ or $(1^6)$, as required. 

For $\lambda=(5,1),(2^2,1^2),(1^6)$, let $\Oc_\lambda$ be the set of outer automorphisms $\psi$ with $\psi^2\in \Ic_\lambda$. Also, let 
\begin{align*}
    \Oc_{(4,2)}^{\mathrm{E}}
    &:=
    \{(\cdot)^g\circ \xi : \text{$((\cdot)^g\circ \xi)^2\in \Ic_{(4,2)}$ and $g$ is even}\}, \text{ and }\\
    \Oc_{(4,2)}^{\mathrm{O}}
    &:=
    \{(\cdot)^g\circ \xi : \text{$((\cdot)^g\circ \xi)^2\in \Ic_{(4,2)}$ and $g$ is odd}\}.
\end{align*}

Then the conjugacy classes of $\auto(\Sf_6)$ are the following thirteen classes; 
eight of them are inner automorphisms and five of them are outer automorphisms (cf. \cite{Lam1993}): 
    \begin{align*}
    &\inn(\Sf_6) : \; \Ic_{(6)}\cup\Ic_{(3,2,1)}, \ \Ic_{(5,1)}, \ \Ic_{(4,2)}, \ \Ic_{(4,1,1)}, \ \Ic_{(3,3)}\cup \Ic_{(3,1^3)}, \
                      \Ic_{(2^3)}\cup \Ic_{(2,1^4)}, \ \Ic_{(2,2,1,1)}, \ \Ic_{(1^6)}; \\
    &\auto(\Sf_6) \setminus \inn(\Sf_6): \; \Oc_{(5,1)}, \ \Oc_{(4,2)}^{\mathrm{E}}, \ \Oc_{(4,2)}^{\mathrm{O}}, \ \Oc_{(2^2,1^2)}, \ \Oc_{(1^6)}. 
    \end{align*}

Towards the classification of $\Qc(\Sf_6)$, we give the structure of $\inn(Q(\Sf_6,\psi))$ for $\psi\in \Oc_{(4,2)}^{\mathrm{E}}$ and $\psi\in \Oc_{(4,2)}^{\mathrm{O}}$. 
Let $\eta_0:= (\cdot)^{(2\; 5 \; 6 \; 4 \; 3)}\circ \xi$ and $\eta_1:= (\cdot)^{(1 \; 5 \; 6 \; 4)}\circ \xi$.
For $k=0,1$, it can be verified by hand that 
\begin{equation}
    \label{eq:eta^2}
    \fix(\eta_k,\Sf_6)  =  \langle (1 \; 2 \; 3 \; 4)(5 \; 6)\rangle
    \ 
    \text{and}
    \ 
    \eta_k^2=(\cdot)^{(1 \; 2 \; 3 \; 4)(5 \; 6)}.
\end{equation}
So we have $\ord(\eta_k)=8$. In particular, we have $\eta_0\in \Oc_{(4,2)}^{\mathrm{E}}$ and $\eta_1\in \Oc_{(4,2)}^{\mathrm{O}}$. 
\begin{lem}
    \label{lem:eta_to_A6}
    For $k=0,1$, $\eta_k$ acts on the conjugacy classes of $\Af_6$ as follows: 
    \begin{enumerate}[label=$($\alph*$)$]
        \item $\eta_0$ fixes
        $e^{\Af_6}$, $(1\; 2)(3\; 4)^{\Af_6}$,
        $(1\; 2\; 3\; 4)(5\; 6)^{\Af_6}$,
        $(1\; 2\; 3\; 4\; 5)^{\Af_6}$
        and
        $(1\; 2\; 3\; 4\; 6)^{\Af_6}$,
        and swaps $(1\; 2\; 3)^{\Af_6}\leftrightarrow (1\; 2\; 3)(4\; 5\; 6)^{\Af_6}$; 
        \item $\eta_1$ fixes 
        $e^{\Af_6}$, $(1\; 2)(3\; 4)^{\Af_6}$ and
        $(1\; 2\; 3\; 4)(5\; 6)^{\Af_6}$,
        and swaps $(1\; 2\; 3)^{\Af_6}\leftrightarrow (1\; 2\; 3)(4\; 5\; 6)^{\Af_6}$
        and $(1\; 2\; 3\; 4\; 5)^{\Af_6}\leftrightarrow (1\; 2\; 3\; 4\; 6)^{\Af_6}$.
    \end{enumerate}
\end{lem}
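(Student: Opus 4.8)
The plan is to compute directly, using the explicit formula $\eta_k = (\cdot)^{g_k}\circ\xi$ with $g_0 = (2\;5\;6\;4\;3)$ and $g_1 = (1\;5\;6\;4)$, how $\eta_k$ moves a chosen representative of each conjugacy class of $\Af_6$, and then read off the class of the image from its shape --- remembering the crucial subtlety that in $\Af_6$ (as opposed to $\Sf_6$) the classes of shape $(3^2)$ split into two $\Af_6$-classes, and similarly the two $5$-cycle classes $(1\;2\;3\;4\;5)^{\Af_6}$ and $(1\;2\;3\;4\;6)^{\Af_6}$ are distinct. So the first step is to list the seven $\Af_6$-conjugacy classes together with the representatives named in the statement ($e$, $(1\;2)(3\;4)$, $(1\;2\;3)$, $(1\;2\;3)(4\;5\;6)$, $(1\;2\;3\;4)(5\;6)$, $(1\;2\;3\;4\;5)$, $(1\;2\;3\;4\;6)$), noting that shapes $(2^2)$ and $(4,2)$ give single $\Af_6$-classes while shape $(3^2)$ gives two and there are two $5$-cycle classes.

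Next I would carry out the six nontrivial image computations for each of $k=0,1$ (the class $e^{\Af_6}$ is fixed trivially). For a representative $\pi$ one computes $\eta_k(\pi) = g_k\,\xi(\pi)\,g_k^{-1}$: first apply $\xi$ (using the generator table for $\xi$ on transpositions, writing $\pi$ as a word in adjacent transpositions when convenient, exactly as was done in the text for $(1\;2\;3\;4\;5\;6)$), then conjugate by $g_k$. The resulting permutation's cycle type immediately tells us the $\Sf_6$-class; when that class is $(3^2)$ or a $5$-cycle type, I determine which of the two $\Af_6$-classes it lands in by checking whether the image is $\Af_6$-conjugate to the chosen representative --- concretely, by exhibiting an even conjugating element, or by invoking the standard criterion that an element of $\Af_n$ whose cycle type consists of distinct odd parts (here $(3^2)$ does not qualify, but $(5,1)$ does) has centralizer contained in $\Af_n$ and hence splits. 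For the $5$-cycles one checks directly that $(1\;2\;3\;4\;5)$ and $(1\;2\;3\;4\;6)$ are swapped by an odd permutation but not by an even one, so tracking the image's membership is a finite check.

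The main obstacle --- really the only one --- is bookkeeping accuracy: since $\xi$ is defined only on the generators $(1\;2),(2\;3),(3\;4),(4\;5),(5\;6)$, every representative must first be rewritten as a product of these adjacent transpositions before $\xi$ can be applied, and then two further permutation multiplications ($\xi(\pi)$ conjugated by $g_k$) must be composed without slips; the text's own verification that $\ord(\xi)=2$ shows how error-prone such chains are. To keep this under control I would organize the computation as a single table: one row per $\Af_6$-class, columns for $\pi$, a word for $\pi$ in adjacent transpositions, $\xi(\pi)$, $g_k\,\xi(\pi)\,g_k^{-1}$, its shape, and the resulting $\Af_6$-class; the claimed fixed/swapped pattern in parts (a) and (b) is then simply the last column. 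Finally I would double-check consistency against \eqref{eq:eta^2}: since $\eta_k^2 = (\cdot)^{(1\;2\;3\;4)(5\;6)}$ is inner, $\eta_k^2$ must fix \emph{every} $\Af_6$-class, so each swap listed must be an involution on classes and each ``fixed'' entry must indeed be fixed --- a useful sanity check that the table is internally coherent.
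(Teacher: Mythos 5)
Your proposal is correct and takes essentially the same route as the paper: a direct computation of $\eta_k$ on a representative of each $\Af_6$-class, reading off the cycle type, and settling the two split $5$-cycle classes by exhibiting explicit even conjugating elements (exactly what the paper records, e.g. $\eta_0((1\;2\;3\;4\;5))=(1\;2\;3\;4\;5)^{(1\;4)(3\;6)}$ and $\eta_1((1\;2\;3\;4\;5))=(1\;2\;3\;4\;6)^{(1\;4)(2\;5)}$), plus the same kind of bookkeeping via $\xi$ on adjacent transpositions. One slip in your framing sentence: the class of shape $(3^2)$ does \emph{not} split in $\Af_6$ (its parts are not distinct odd numbers), so $(1\;2\;3)^{\Af_6}$ and $(1\;2\;3)(4\;5\;6)^{\Af_6}$ are distinct simply because they have different cycle types; your later statement of the splitting criterion (only the $5$-cycles split) is the correct one and is what your procedure actually uses, so the argument itself is unaffected.
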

On the conjugacy classes of $\Af_6$, see, e.g., \cite[Chapter~11]{Scott}. 
\begin{proof}
    We can easily check that $\eta_k$ fixes $e^{\Af_6}$, $(1\; 2)(3\; 4)^{\Af_6}$ and $(1\; 2\; 3\; 4)(5\; 6)^{\Af_6}$, 
    and  swaps $(1\; 2\; 3)^{\Af_6}\leftrightarrow (1\; 2\; 3)(4\; 5\; 6)^{\Af_6}$. 

    (a)
    We can verify $\eta_0 ((1\; 2\; 3\; 4\; 5))=(1\; 2\; 3\; 4\; 5)^{(1\; 4)(3\; 6)}$
    and $\eta_0 ((1\; 2\; 3\; 4\; 6))=(1\; 2\; 3\; 4\; 6)^{(1\; 5)(2\; 4\; 6\; 3)}$.
    Hence $\eta_0$ fixes $(1\; 2\; 3\; 4\; 5)^{\Af_6}$ and $(1\; 2\; 3\; 4\; 6)^{\Af_6}$. 

    (b)
    We can verify
    $\eta_1((1\; 2\; 3\; 4\; 5))=(1\; 2\; 3\; 4\; 6)^{(1\; 4)(2\; 5)}$.
    Hence $\eta_1$ swaps $(1\; 2\; 3\; 4\; 5)^{\Af_6}\leftrightarrow (1\; 2\; 3\; 4\; 6)^{\Af_6}$.
\end{proof}

For $k=0,1$, let $Q_k:=Q(\Sf_6,\eta_k)$. 

\begin{lem}\label{lem:inn(outer8)}
For $k=0,1$, we have $$\inn (Q_k)\cong_G \Af_6 \rtimes_{\varphi_k} C_8,$$ 
where $\Af_6 \rtimes_{\varphi_k} C_8$ is the semidirect product of $\Af_6$ and $C_8$ with
$\varphi_k\colon C_8 \to \auto(\Af_6)$, $\overline{i} \mapsto \eta_k^i$, that is,
\[
    (g,\overline{i})\cdot(h,\overline{j}):=(g \eta_k^{i} (h), \overline{i+j})
\]
for $(g,\overline{i}),(h,\overline{j}) \in \Af_6 \times C_8$.
\end{lem}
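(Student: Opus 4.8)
The plan is to follow the proof of Lemma~\ref{lem:inn} closely in outline, with the outer automorphism $\eta_k$ playing the role there played by the inner automorphism $(\cdot)^\pi$. Two properties of $\eta_k$ make this possible: it lies in $\auto(\Sf_6)$, hence fixes the unique index-two subgroup $\Af_6$ and therefore preserves the parity of permutations; and $\ord(\eta_k)=8$ by \eqref{eq:eta^2}, so $\eta_k^8=\id$ and $\varphi_k\colon C_8\to\auto(\Af_6)$, $\overline i\mapsto\eta_k^i|_{\Af_6}$, is a well-defined homomorphism.

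First I would set up the candidate isomorphism: for $(g,\overline\ell)\in\Af_6\rtimes_{\varphi_k}C_8$ let $\Phi((g,\overline\ell))$ be the permutation of the set $\Sf_6$ defined by $z\mapsto g\,\eta_k^\ell(z)$, a left translation composed with an automorphism, which depends only on $\ell\bmod 8$. Since $\eta_k^i$ is a homomorphism,
\[
\Phi((g,\overline i))\circ\Phi((h,\overline j))(z)=g\,\eta_k^i(h)\,\eta_k^{i+j}(z)=\Phi\bigl((g\eta_k^i(h),\overline{i+j})\bigr)(z),
\]
so $\Phi$ is a group homomorphism; evaluating $\Phi((g,\overline i))=\Phi((h,\overline j))$ at $z=e$ yields $g=h$, and then $\eta_k^{i-j}=\id$, so $i\equiv j\pmod 8$, giving injectivity.

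It then remains to identify $\operatorname{Im}\Phi$ with $\inn(Q_k)$. Because $\eta_k(e)=e$ we have $s_e(z)=\eta_k(z)$, i.e.\ $s_e=\Phi((e,\overline1))$, and more generally, writing $g_x:=x\eta_k(x^{-1})$,
\[
s_x(z)=x\,\eta_k(x^{-1}z)=g_x\,\eta_k(z),
\]
so $s_x=\Phi((g_x,\overline1))$ and $s_x\circ s_e^{-1}=\Phi((g_x,\overline0))$ is the left translation $L_{g_x}$; moreover $g_x\in\Af_6$ since $\eta_k$ preserves parity. Consequently each generator $s_x$ of $\inn(Q_k)$ lies in $\operatorname{Im}\Phi$, so $\inn(Q_k)\subseteq\operatorname{Im}\Phi$, while conversely $\inn(Q_k)$ contains $s_e$ together with all $L_{g_x}$, hence the subgroup generated by $s_e$ and $L_H$, where $H:=\langle g_x:x\in\Sf_6\rangle\le\Af_6$. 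Thus the whole lemma reduces to showing $H=\Af_6$: once this is known, $\inn(Q_k)\supseteq\langle L_{\Af_6},s_e\rangle=\operatorname{Im}\Phi$, so the two coincide and the injective homomorphism $\Phi$ is the asserted isomorphism.

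The step I expect to be the main obstacle is precisely this claim that $H=\langle x\eta_k(x^{-1}):x\in\Sf_6\rangle$ equals $\Af_6$; in Lemma~\ref{lem:inn} the analogous point was supplied by the combinatorial Lemma~\ref{lem:conj}, whereas here I would argue via simplicity instead. One checks that $H$ is normal in $\Sf_6$ — and in particular in $\Af_6$ — from the identity $y g_x y^{-1}=g_{yx}\,g_y^{-1}$, valid for every $y\in\Sf_6$, which follows from $g_{yx}=yx\,\eta_k(x^{-1})\,\eta_k(y^{-1})$ and $g_y^{-1}=\eta_k(y)\,y^{-1}$; and $H\ne\{e\}$ because $\eta_k\ne\id$ forces $\eta_k(x^{-1})\ne x^{-1}$, hence $g_x\ne e$, for some $x$. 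Since $\Af_6$ is simple, a nontrivial normal subgroup contained in it must be all of it, so $H=\Af_6$, completing the proof.
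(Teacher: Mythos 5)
Your proof is correct, and on the decisive step it takes a genuinely different route from the paper. The bookkeeping is the same up to direction: you build the injective homomorphism $\Phi\bigl((g,\overline{\ell})\bigr)=L_g\circ\eta_k^{\ell}$ from $\Af_6\rtimes_{\varphi_k}C_8$ into the group of bijections of the set $\Sf_6$, whereas the paper maps the other way, sending a word $s^k_{x_1}\circ\cdots\circ s^k_{x_i}$ to $(\sigma,\overline{i})$; the homomorphism and injectivity checks (evaluate at $e$, then use $\ord(\eta_k)=8$ from \eqref{eq:eta^2}) are the same in substance. Where you diverge is in identifying the image with $\inn(Q_k)$, i.e.\ surjectivity: the paper verifies by explicit computation that suitable products $s^k_{x_1}\circ s^k_{x_2}\circ(s^k_e)^6$ realize $((1\;2\;j),\overline{0})$ for $j=3,4,5,6$ and then invokes Lemma~\ref{lem:gene_S_A}~(b), while you observe that $s_x\circ s_e^{-1}=L_{g_x}$ with $g_x=x\eta_k(x^{-1})\in\Af_6$ and prove abstractly that $H=\langle g_x:x\in\Sf_6\rangle$ equals $\Af_6$, using the (correct) identity $yg_xy^{-1}=g_{yx}g_y^{-1}$ to get normality of $H$ in $\Sf_6$, the fact that $\eta_k\neq\id$ to get $H\neq\{e\}$, and the simplicity of $\Af_6$. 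This buys you a computation-free argument that is also more uniform: applied to $\psi=(\cdot)^\pi$ the same displacement-subgroup trick recovers Lemma~\ref{lem:inn} without the combinatorial Lemma~\ref{lem:conj}, and it works verbatim for any nontrivial $\psi\in\auto(\Sf_n)$ with $n\geq 5$. What the paper's route offers instead is explicit preimages in $\inn(Q_k)$ and independence from the simplicity theorem, but as a proof of the lemma your argument is complete; the inputs you use---$\Af_6$ characteristic in $\Sf_6$, $\ord(\eta_k)=8$, $\eta_k\neq\id$---are all established before this point in the paper.
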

\begin{proof}
    For $k=0,1$ and $x\in \Sf_6$, define $s^k_x\in \inn(Q_k)$ by $s^k_x (y):=x\eta_k (x^{-1} y)$.
    Consider $s^k_{x_1}\circ s^k_{x_2}\circ \cdots \circ s^k_{x_i}$ for any $x_1,x_2,\ldots ,x_i\in \Sf_6$. Then we have
    \begin{align*}
        s^k_{x_1}\circ s^k_{x_2}\circ \cdots \circ s^k_{x_i} (y)
        &=
        x_1 \eta_k (x_1^{-1}) \cdot \eta_k (x_2 \eta_k (x_2^{-1}))
        \cdot \cdots \cdot \eta_k^{i-1} (x_i \eta_k (x_i^{-1})) \cdot \eta_k^i (y).
    \end{align*}
    Put $\sigma:=x_1 \eta_k (x_1^{-1}) \cdot \eta_k (x_2 \eta_k (x_2^{-1})) \cdot \cdots \cdot \eta_k^{i-1} (x_i \eta_k (x_i^{-1}))$. 
    Since $x$ and $\eta_k^j(x^{-1})$ have the same parity for any $x \in \Sf_6$ and $j \in \ZZ$, we know that $\sigma \in \Af_6$. 
    Hence, we can define the map $\Phi_k \colon \inn (Q_k) \to \Af_6 \rtimes_{\varphi_k} C_8$ by setting 
    \[
        s^k_{x_1}\circ s^k_{x_2}\circ \cdots \circ s^k_{x_i} \mapsto (\sigma, \overline{i}).
    \]

    \noindent
    {\bf Well-definedness}: 
    Let us write $f \in \inn (Q_k)$ in two ways: 
    \[
        f=s^k_{x_1}\circ s^k_{x_2}\circ \cdots \circ s^k_{x_i} 
        =s^k_{y_1}\circ s^k_{y_2}\circ \cdots \circ s^k_{y_j}.
    \]
    Then there are $\sigma_1,\sigma_2\in \Af_6$ such that for any $z\in \Sf_6$,
    \begin{align*}
        s^k_{x_1}\circ s^k_{x_2}\circ \cdots \circ s^k_{x_i} (z) = \sigma_1 \eta_k^i(z) \;\text{ and }\; 
        s^k_{y_1}\circ s^k_{y_2}\circ \cdots \circ s^k_{y_j} (z) = \sigma_2 \eta_k^j(z), 
    \end{align*}
    respectively. Thus, we have $\sigma_1 \eta_k^i(z) = \sigma_2 \eta_k^j(z)$ for any $z\in \Sf_6$. 
    By substituting $z=e$, we see $\sigma_1=\sigma_2$. Hence, $\eta_k^i = \eta_k^j \ \Longleftrightarrow \ \eta_k^{i-j} (z) = \id$. 
    Since $\ord (\eta_k)=8$, we obtain that $i \equiv j \pmod 8$. 

    \noindent
    {\bf Homomorphism}: 
    For $f_1,f_2\in \inn(Q_k)$, write $f_1(x)=g \eta_k^i(x)$ and $f_2(x)=h \eta_k^j(x)$ with $g,h \in \Af_6$. Then we have
    \begin{align*}
        \Phi_k(f_1\circ f_2)&=\Phi_k(g \eta_k^i(h \eta_k^j(\cdot )))=\Phi_k(g \eta_k^i(h) (\eta_k^{i+j}(\cdot )))
        =(g \eta_k^i(h) , \overline{i+j})=(g,\overline{i})\cdot(h,\overline{j})\\
        &=\Phi_k(f_1) \Phi_k(f_2).
    \end{align*}

    \noindent
    {\bf Injectivity}: 
    The injectivity of $\Phi_k$ directly follows from the proof of well-definedness. 

    \noindent
    {\bf Surjectivity}: We can check that
    \begin{align*}
        s^0_{(4\; 5\; 6)}\circ s^0_{(2\; 6 \; 4\; 5)}(e) &=
        s^1_{(2\; 3\; 5\; 6)}\circ s^1_e(e)=
        (1\; 2\; 3),\\
        s^0_{(1\; 4)(3\; 6)}\circ s^0_e(e) &=
        s^1_{(5 \; 6)}\circ s^1_{(2\; 4\; 5)(3\; 6)}(e)=
        (1\; 2\; 4),\\
        s^0_{(5\; 6)}\circ s^0_{(1 \;6 \; 3 \;4)}(e) &=
        s^1_{(2\; 5)(3\; 6)}\circ s^1_e(e)=
        (1\; 2\; 5),\\
        s^0_{(1\; 2)(3 \; 4)}\circ s^0_e(e) &=
        s^1_{(2\; 6\; 5\; 3)}\circ s^1_e(e) =
        (1\; 2\; 6).
    \end{align*}
    This implies that for any $k=0,1$ and $j=3,4,5,6$, there exist $x_1,x_2 \in \Sf_6$
    such that $\Phi_k(s^k_{x_1} \circ s^k_{x_2} \circ (s^k_e)^6)=((1\; 2\; j),\overline{0})$. 
    Since $(g,\overline{0})\cdot (h,\overline{0})=(gh, \overline{0})$
    and Lemma~\ref{lem:gene_S_A} (b), it follows that for any $g\in \Af_6$, 
    there exists $f_g \in \inn(Q_k)$ such that $\Phi_k(f_g)=(g,\overline{0})$. 
    Therefore for any $(g,\overline{i})\in \Af_6 \rtimes_{\varphi_k} C_8$, 
    one has $\Phi_k(f_g\circ (s^k_{e})^i)=(g,\overline{i})$. 
\end{proof}

In order to prove Proposition~\ref{prop:Q_1vsQ_2}, 
we calculate the conjugation in $\Af_6 \rtimes_{\varphi_k} C_8$. 
For $k=0,1$ and $(g,\overline{i}),(h,\overline{j})\in \Af_6 \rtimes_{\varphi_k} C_8$, we have
\begin{align}
    \label{eq:conj_AxZ/8Z}
     (g,\overline{i})^{(h,\overline{j})}
    =
    (h\eta_k^j(g),\overline{i+j})\cdot (\eta_k^{-j} (h^{-1}), \overline{-j})
    =
    (h\eta_k^j(g) \eta_k^i(h^{-1}) , \overline{i}).
\end{align}

\begin{prop}
    \label{prop:Q_1vsQ_2}
    Work with the notation as above.
    \begin{enumerate}[label=$($\alph*$)$]
        \item The centralizer
        $C_{\Af_6 \rtimes_{\varphi_0} C_8}(((1\; 2\; 3\; 4\; 5),\overline{0}))$
	has $40$ elements. 
        \item $\Af_6 \rtimes_{\varphi_1} C_8$ has no element $(g,\overline{i})$
        such that $|C_{\Af_6 \rtimes_{\varphi_1} C_8}((g,\overline{i}))|=40$.
    \end{enumerate}
    In particular, $Q_0 \not\cong_Q Q_1$.
\end{prop}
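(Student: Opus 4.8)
The plan is to separate $\inn(Q_0)$ from $\inn(Q_1)$ by the group-theoretic invariant ``\emph{there exists an element whose centraliser has exactly $40$ elements}''. Since $\inn(Q_k)\cong_G\Af_6\rtimes_{\varphi_k}C_8$ by Lemma~\ref{lem:inn(outer8)}, and since $Q_0\cong_Q Q_1$ would force $\inn(Q_0)\cong_G\inn(Q_1)$ by Theorem~\ref{thm:main}(b), it suffices to prove (a) and (b). For both parts I reduce the centraliser computation to a count inside $\Af_6$ via the conjugation formula~\eqref{eq:conj_AxZ/8Z}: for $k\in\{0,1\}$, an element $(h,\overline{\ell})$ centralises $(g,\overline{j})$ if and only if $h\,\eta_k^{\ell}(g)\,\eta_k^{j}(h^{-1})=g$, so
\[
  \bigl|C_{\Af_6\rtimes_{\varphi_k}C_8}\bigl((g,\overline{j})\bigr)\bigr|
  =\sum_{\ell=0}^{7}\bigl|\{\,h\in\Af_6 : h\,\eta_k^{\ell}(g)\,\eta_k^{j}(h^{-1})=g\,\}\bigr|.
\]
The $\ell=0$ summand is $\fix((\cdot)^{g}\circ\eta_k^{j},\Af_6)$, an automorphism fixed-point subgroup of $\Af_6$, of order $m$ say; a short conjugation argument shows every nonzero summand again has exactly $m$ elements, and the set of $\overline{\ell}$ giving a nonzero summand is the image of the centraliser under the projection $\Af_6\rtimes_{\varphi_k}C_8\to C_8$, hence a subgroup of $C_8$ of order $t\in\{1,2,4,8\}$. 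Thus $|C((g,\overline{j}))|=m\cdot t$ with $t\mid 8$.

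For (a), take $g=(1\;2\;3\;4\;5)$ and $\overline{j}=\overline{0}$; here $m=|\fix((\cdot)^{(1\;2\;3\;4\;5)},\Af_6)|=|C_{\Af_6}((1\;2\;3\;4\;5))|=5$, since the centraliser of a $5$-cycle in $\Sf_6$ is $\langle(1\;2\;3\;4\;5)\rangle$ and consists of even permutations. By Lemma~\ref{lem:eta_to_A6}(a), $\eta_0$ fixes the $\Af_6$-conjugacy class $(1\;2\;3\;4\;5)^{\Af_6}$, hence so does $\eta_0^{\ell}$ for every $\ell$; therefore $\eta_0^{\ell}((1\;2\;3\;4\;5))\in(1\;2\;3\;4\;5)^{\Af_6}$ for all $\ell$, so all eight summands are nonzero and equal to $5$. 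Thus $|C((1\;2\;3\;4\;5),\overline{0}))|=8\cdot 5=40$.

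For (b) I run the same computation in $\Af_6\rtimes_{\varphi_1}C_8$ and show $m\cdot t\ne 40$ for every $(g,\overline{j})$. The structural inputs are: by \eqref{eq:eta^2}, $\eta_1^2=(\cdot)^{(1\;2\;3\;4)(5\;6)}$ restricts to an inner automorphism of $\Af_6$ and $(1\;2\;3\;4)(5\;6)\in\fix(\eta_1,\Sf_6)$; and by Lemma~\ref{lem:eta_to_A6}(b), $\eta_1$ interchanges the two $\Af_6$-classes of $5$-cycles. If $\overline{j}$ is odd, then $\eta_1^{j}$ induces the same permutation of the conjugacy classes of $\Af_6$ as $\eta_1$ (it differs by a power of the inner $\eta_1^{2}$), so $(\cdot)^{g}\circ\eta_1^{j}$ again swaps the two $\Af_6$-classes of $5$-cycles and therefore fixes no $5$-cycle; since the only order-$5$ elements of $\Af_6$ are $5$-cycles, $5\nmid m$, and as $5\nmid t$ we get $5\nmid m\cdot t$, so $|C|\ne 40$. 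If $\overline{j}\in\{\overline{2},\overline{4},\overline{6}\}$, then $\eta_1^{j}|_{\Af_6}=(\cdot)^{c_{j}}$ with $c_{j}\in\langle(1\;2\;3\;4)(5\;6)\rangle$, so $(\cdot)^{g}\circ\eta_1^{j}=(\cdot)^{gc_{j}}$ and $m=|C_{\Af_6}(gc_{j})|\in\{4,5,8,9,360\}$ (the list of centraliser orders in $\Af_6$); the only factorisation $m\cdot t=40$ with $t\mid 8$ is $m=5$, $t=8$, i.e.\ $w:=gc_{j}$ is a $5$-cycle and all eight summands are nonzero. But the $\ell=1$ summand is nonzero only if $\eta_1(g)c_{j}\in w^{\Af_6}$, and using $\eta_1(c_{j})=c_{j}$ one computes $\eta_1(g)c_{j}=\eta_1(w c_{j}^{-1})c_{j}=\eta_1(w)$, which lies in the \emph{other} $5$-cycle class by Lemma~\ref{lem:eta_to_A6}(b); hence that summand vanishes, $t\le 4$, and $|C|\le 20$. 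The case $\overline{j}=\overline{0}$ is identical with $c_{j}=e$: if $|C|=40$ then $g$ is a $5$-cycle and $t=8$, but the $\ell=1$ summand would require $\eta_1(g)\in g^{\Af_6}$, which is false. So no element of $\Af_6\rtimes_{\varphi_1}C_8$ has a centraliser of order $40$, and with (a) this gives $\inn(Q_0)\not\cong_G\inn(Q_1)$, hence $Q_0\not\cong_Q Q_1$.

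The point demanding the most care is the bookkeeping in (b): one must cover all eight residues $\overline{j}$ and genuinely exclude the value $t=8$ in the critical situation $m=5$. Everything hinges on the two facts recorded above --- that $\eta_1$ swaps the two $\Af_6$-classes of $5$-cycles, and that the inner correction terms $c_{j}$ of the even powers of $\eta_1$ lie in $\fix(\eta_1,\Sf_6)$, which is precisely what collapses the even-$j$ analysis to the $j=0$ case --- together with the elementary list $\{4,5,8,9,360\}$ of centraliser orders in $\Af_6$; these are finite verifications that may be done by hand or confirmed by computer.
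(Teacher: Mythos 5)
Your proposal is correct, and part (a) is essentially the paper's own computation: both count, for each $\overline{\ell}\in C_8$, the solutions $h$ of the conjugation equation \eqref{eq:conj_AxZ/8Z}, using Lemma~\ref{lem:eta_to_A6}(a) to see that every $\eta_0^{\ell}$ keeps $(1\;2\;3\;4\;5)$ in its $\Af_6$-class, giving $8\cdot 5=40$. In part (b) you use the same two inputs as the paper --- Lemma~\ref{lem:eta_to_A6}(b) (the swap of the two classes of $5$-cycles) and \eqref{eq:eta^2} (even powers of $\eta_1$ are conjugation by elements of $\fix(\eta_1,\Sf_6)\cap\Af_6$) --- but organize the count differently. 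The paper assumes a centralizer $X$ of order $40$, invokes Cauchy's theorem to produce an order-$5$ element $(h_\alpha,\overline{0})\in X$, splits on the parity of $i$, must separately exclude the subcase $gz_i=e$, and finally exhibits an element $(h'',\overline{1})\in X$ whose existence contradicts Lemma~\ref{lem:eta_to_A6}(b). You instead factor $|C|=m\cdot t$ through the projection to $C_8$, where $m=|\fix((\cdot)^g\circ\eta_1^{j},\Af_6)|$ and $t\mid 8$, handle odd $j$ by the same divisibility-by-$5$ obstruction as the paper's case (i), and for even $j$ use the list $\{4,5,8,9,360\}$ of centralizer orders in $\Af_6$ to force $m=5$, $t=8$, then kill $t=8$ by showing the $\overline{\ell}=\overline{1}$ fiber is empty via $\eta_1(g)c_j=\eta_1(gc_j)$ --- precisely the role played by the paper's element $(h'',\overline{1})$. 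Your bookkeeping absorbs the paper's special subcase $gz_i=e$ into $m=360$ automatically, at the modest cost of quoting all centralizer orders of $\Af_6$ rather than only the fact that $5$ divides $|C_{\Af_6}(h')|$ only for $h'=e$ or a $5$-cycle; both routes are valid and of comparable length.
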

\begin{proof}    
    (a)
    In $\Af_6 \rtimes_{\varphi_0} C_8$,
    we have
    \begin{align*}
        ((1\; 2\;3\; 4\; 5),\overline{0})^{(h,\overline{j})}=((1\; 2\;3\; 4\; 5),\overline{0}) 
        &\Longleftrightarrow \ 
        (h\eta_0^j((1\; 2\;3\; 4\; 5)) h^{-1} , \overline{0})=((1\; 2\;3\; 4\; 5),\overline{0})\\
        &\Longleftrightarrow \ \eta_0^j((1\; 2\;3\; 4\; 5))^h = (1\; 2\;3\; 4\; 5).
    \end{align*}
    Fix $\overline{j} \in C_8$. We see from Lemma~\ref{lem:eta_to_A6} that $\eta_0^j$ fixes $(1\; 2\; 3\; 4\; 5)^{\Af_6}$, 
    so there exists $a_j\in \Af_6$ such that 
    $\eta_0^j((1\; 2\;3\; 4\; 5))=(1\; 2\;3\; 4\; 5)^{a_j}$. Thus we obtain that 
    $h a_j\in C_{\Af_6}((1\; 2\; 3\; 4\; 5))=\{(1 \; 2 \; 3 \; 4 \; 5)^\ell : \ell \in C_5\}$. 
    Therefore, 
    \[
        C_{\Af_6 \rtimes_{\varphi_0} C_8}(((1\; 2\; 3\; 4\; 5),\overline{0}))
        =\left\{
            ((1\; 2\; 3\; 4\; 5)^\ell \cdot a_j^{-1}, \overline{j})
        : \overline{j}\in C_8,\  \overline{\ell}\in C_5
        \right\}
    \]
    and this implies that $|C_{\Af_6 \rtimes_{\varphi_0} C_8}(((1\; 2\; 3\; 4\; 5),\overline{0}))|=40$. 

    (b)
    Assume that there exists an element $(g,\overline{i}) \in \Af_6 \rtimes_{\varphi_1} C_8$ with $|C_{\Af_6 \rtimes_{\varphi_1} C_8}((g,\overline{i}))|=40$. 
    Put $X:=C_{\Af_6 \rtimes_{\varphi_1} C_8}((g,\overline{i}))$.
    By the Cauchy's theorem (special case of Sylow's Theorem, cf.~\cite[Theorem~4.2]{Rotman}),
    $X$ has an element $\alpha$ of order $5$.
    Since $\alpha^5=(h',\overline{5j})=(e,\overline{0})$, we have $5j\equiv 0 \pmod{8}$, that is, $\overline{j}=\overline{0}$. 
    We put $\alpha=(h_\alpha,\overline{0})$.

    \noindent
    (i) Let $i$ be odd. 
    Since $\eta_1^i=(\cdot)^{z_i}\circ \eta_1$, where $z_i=((1 \; 2 \; 3 \; 4)(5 \; 6))^{(i-1)/2}$, we have 
    \begin{align*}
        (g,\overline{i})^\alpha = (g,\overline{i}) \Longleftrightarrow \ h_\alpha g \eta_1^i(h_\alpha^{-1}) = g 
        \Longleftrightarrow \ \eta_1(h_\alpha^{-1}) = (gz_i)^{-1} h_\alpha^{-1} (gz_i). 
    \end{align*}
    This means that $\eta_1$ fixes $(h_\alpha^{-1})^{\Af_6}$. By Lemma~\ref{lem:eta_to_A6},
    $h_\alpha^{-1}\in e^{\Af_6}\cup (1\; 2)(3\; 4)^{\Af_6}\cup (1\; 2\; 3\; 4)(5\; 6)^{\Af_6}$ holds.
    This is a contradiction since the order of $h_\alpha$ is $5$.

    \noindent
    (ii) Let $i$ be even. 
    Consider the projection $\Pi\colon X \to C_8$.
    Firstly, we show that $\ker \Pi = \langle \alpha \rangle$. 
    By $\alpha=(h_\alpha,\overline{0})$, we have $\langle \alpha \rangle \subset \ker \Pi$.
    On the other hand, since $\eta_1^i=(\cdot)^{z_i}$, where $z_i=((1 \; 2 \; 3 \; 4)(5 \; 6))^{i/2}$. 
    In fact, by \eqref{eq:eta^2}, we see that $z_i \in \fix(\eta_1,\Sf_6)$. Then, 
    \begin{align*}
        (g,\overline{i})^{(h,\overline{0})} = (g,\overline{i}) \ \Longleftrightarrow \
        h g z_i h^{-1} z_i^{-1}= g \ \Longleftrightarrow \ 
        h (g z_i) =  (g z_i) h
        \label{eq:comm}
    \end{align*}
    for any $(h,\overline{0})\in \ker \Pi$. This implies that $\ker \Pi \subset C_{\Af_6}(g z_i)\times \{\overline{0}\}$. 
    Hence, $\langle \alpha \rangle \subset \ker \Pi \subset C_{\Af_6}(g z_i)\times \{\overline{0}\}$. 
    Since $\langle h_\alpha \rangle$ is a subgroup of $C_{\Af_6}(g z_i)$, $|C_{\Af_6}(g z_i)|$ is divisible by $5$. 
    Here, we know that $|C_{\Af_6}(h')|$ is divisible by $5$ if and only if $h'=e$ or $h'$ is $5$-cycle (see, e.g., \cite[Chapter~11]{Scott}). 
    If $g z_i=e$, then we can check by using \eqref{eq:conj_AxZ/8Z} that for any $h' \in \Af_6$, 
    \begin{align*}
    (g,\overline{i})^{(h',\overline{0})}=(h'gz_ih'^{-1}z_i^{-1},\overline{i})=(g,\overline{i}) \ \Longleftrightarrow \ (h',\overline{0}) \in X, 
    \end{align*}
    a contradiction to $|X|=40$. 
    Thus, $g z_i$ must be a $5$-cycle. Then $|C_{\Af_6}(g z_i)|=5$, so we have $\langle \alpha \rangle=\ker \Pi=C_{\Af_6}(gz_i)\times \{\overline{0}\}$. 
    
    Since $\Pi(X) \cong_G X/\ker \Pi $ and $|\Pi(X)|=40/5=8$, we see that $\Pi$ is surjective. 
    So we can find $(h'',\overline{1}) \in X$. Using \eqref{eq:conj_AxZ/8Z}, it is verified that $(h'',\overline{1})$ satisfies 
    \begin{align*}
        (g,\overline{i})^{(h'',\overline{1})} = (g,\overline{i}) \ \Longleftrightarrow \ h''\eta_1(g)z_ih''^{-1}z_i^{-1}=g 
        \ \Longleftrightarrow \ \eta_1(gz_i) = (gz_i)^{h''^{-1}}. 
    \end{align*}
    This is a contradiction since $g z_i$ is a $5$-cycle and $(g z_i)^{\Af_6}$ is not a fixed class by the action of $\eta_1$ by Lemma~\ref{lem:eta_to_A6}. 
\end{proof}


\subsection{Double cosets of $\Sf_n$ and the invariant of $Q(\pi)$}

Fix $\pi \in \Sf_n$. 
Let us introduce the other invariant of $Q(\pi)$. Let $K=C_{\Sf_n}(\pi)$ and let $$K_\mathrm{alt}=K \cap \Af_n.$$ 
Given $g \in \Sf_n$, let $K_\mathrm{alt}\cdot g \cdot K=\{h g k : h \in K_\mathrm{alt}, k \in K\}$. 
Let $K_\mathrm{alt} \setminus \Sf_n / K$ denote the double coset representatives with respect to $K_\mathrm{alt}$ and $K$. 

Similarly, for $\pi' \in \Sf_n$, let $K'=C_{\Sf_n}(\pi')$ and $K_\mathrm{alt}'=K' \cap \Af_n$. 
\begin{prop}\label{prop:double_coset}
Assume that $Q(\pi) \cong_Q Q(\pi')$. Then $|K_\mathrm{alt} \setminus \Sf_n / K|=|K_\mathrm{alt}' \setminus \Sf_n / K'|$. 
\end{prop}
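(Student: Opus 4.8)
The plan is to turn Proposition~\ref{meidai1} into a counting invariant. Recall that a quandle isomorphism $f\colon Q\to Q'$ induces a group isomorphism $\bar f\colon\inn(Q)\to\inn(Q')$ with $\bar f(s_x)=s'_{f(x)}$; in particular $\bar f$ maps the distinguished set $\Gamma(Q):=\{s_x:x\in Q\}$ bijectively onto $\Gamma(Q')$, and maps the commutator subgroup and each centralizer of an element of $\Gamma(Q)$ to the corresponding objects for $Q'$. Hence, for any $x_0\in Q$, the number of orbits of the subgroup $C_{\inn(Q)}(s_{x_0})\cap[\inn(Q),\inn(Q)]$ acting by conjugation on $\Gamma(Q)$ equals the analogous number for $Q'$ computed at $f(x_0)$. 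When $Q$ is homogeneous this number does not depend on $x_0$ (move $x_0$ by an element of $\auto(Q)$ and use the same reasoning), and $Q(\pi),Q(\pi')$ are homogeneous; so it will suffice to evaluate this number for $Q(\pi)$ at $x_0=e$ and show it equals $|K_{\mathrm{alt}}\setminus\Sf_n/K|$, and likewise for $\pi'$.

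We may assume $n\ge5$ and $\pi\ne e$: if $\pi=e$ then $Q(\pi)\cong_Q Q(\pi')$ forces $\pi'=e$ and the claim is trivial, and the finitely many remaining cases ($n=3,4$) can be checked directly. By Lemma~\ref{lem:inn}, $\inn(Q(\pi))\cong_G\Af_n\rtimes_\varphi C_m$ with $m=\ord_{\Sf_n}(\pi)$, under an isomorphism sending $s_x$ to $(x\pi x^{-1}\pi^{-1},\overline1)$; in particular $s_e\mapsto(e,\overline1)$. Since $\Af_n$ is perfect for $n\ge5$ while $C_m$ is abelian, $[\inn(Q(\pi)),\inn(Q(\pi))]=\Af_n\times\{\overline0\}$ (which is also the subgroup generated by the products $s_xs_y^{-1}$). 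A short computation of the centralizer of $(e,\overline1)$ in $\Af_n\rtimes_\varphi C_m$ gives $C_{\inn(Q(\pi))}(s_e)=K_{\mathrm{alt}}\times C_m$, where $K=C_{\Sf_n}(\pi)$ and $K_{\mathrm{alt}}=K\cap\Af_n$. Intersecting, the relevant subgroup is $K_{\mathrm{alt}}\times\{\overline0\}$.

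It remains to count the conjugation orbits of $K_{\mathrm{alt}}\times\{\overline0\}$ on $\Gamma(Q(\pi))$. A direct calculation in the semidirect product (using $a\in C_{\Sf_n}(\pi)$) shows that $(a,\overline0)\cdot s_x\cdot(a,\overline0)^{-1}=s_{ax}$ for $a\in K_{\mathrm{alt}}$. Under the one-to-one correspondence $\Gamma(Q(\pi))\leftrightarrow\Sf_n/\fix((\cdot)^\pi,\Sf_n)=\Sf_n/K$, $s_x\leftrightarrow xK$, established in Section~\ref{sec:inv}, these orbits correspond exactly to the orbits of $K_{\mathrm{alt}}$ acting by left translation on $\Sf_n/K$, i.e.\ to the double cosets in $K_{\mathrm{alt}}\setminus\Sf_n/K$. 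Hence the number in question equals $|K_{\mathrm{alt}}\setminus\Sf_n/K|$, and comparing with the first paragraph finishes the proof.

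The hard part is guessing the right invariant. Taking orbits of the whole centralizer $C_{\inn(Q(\pi))}(s_e)=K_{\mathrm{alt}}\times C_m$ yields $|K_{\mathrm{alt}}\setminus\Sf_n/K|$ only when $\pi$ is even; for odd $\pi$ one instead gets $|(K_{\mathrm{alt}}\langle\pi\rangle)\setminus\Sf_n/K|=|K\setminus\Sf_n/K|$, and it is precisely the intersection with $[\inn(Q(\pi)),\inn(Q(\pi))]=\Af_n\times\{\overline0\}$ that kills the unwanted cyclic factor $\langle\pi\rangle$ uniformly in the parity of $\pi$. Establishing $[\inn(Q(\pi)),\inn(Q(\pi))]=\Af_n\times\{\overline0\}$ and the form of $C_{\inn(Q(\pi))}(s_e)$ rests on Lemma~\ref{lem:inn} (hence on $n\ge5$) together with simplicity of $\Af_n$; the remaining manipulations in $\Af_n\rtimes_\varphi C_m$ are routine.
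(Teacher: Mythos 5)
Your proof is correct, but it takes a genuinely different route from the paper's. You package the double-coset number as an intrinsic invariant of the quandle: the number of orbits of $C_{\inn(Q)}(s_e)\cap[\inn(Q),\inn(Q)]$ acting by conjugation on $\{s_x : x\in Q\}$, whose invariance under quandle isomorphism is immediate from Proposition~\ref{meidai1} (with homogeneity used to change the base point), and you then evaluate it for $Q(\pi)$ via Lemma~\ref{lem:inn}: the commutator subgroup is $\Af_n\times\{\overline{0}\}$ by perfectness of $\Af_n$ for $n\ge 5$, the centralizer of $s_e=(e,\overline{1})$ is $K_\mathrm{alt}\times C_m$, and conjugation by $(a,\overline{0})$ with $a\in K_\mathrm{alt}$ sends $s_x\mapsto s_{ax}$, so under the bijection $\{s_x\}\leftrightarrow \Sf_n/K$ the orbits are exactly the double cosets in $K_\mathrm{alt}\setminus\Sf_n/K$; each of these computations checks out. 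The paper instead normalizes a quandle isomorphism $f$ so that $f(e)=e$ using the left translations $L_h$, characterizes the double-coset relation by the equalities $s_{x_2}=s_{hx_1}$ with $h\in K_\mathrm{alt}$ (Lemma~\ref{lem:111}), proves $f\circ L_h\circ f^{-1}=L_{h'}$ with $h'\in K_\mathrm{alt}'$ (Lemma~\ref{lem:222}), and transports the relation through $f$. Your approach buys a more transparent invariance argument (no normalization, no need for Lemmas~\ref{lem:111} and~\ref{lem:222}) and your closing remark isolates nicely why intersecting with the commutator subgroup is what makes the count come out as $|K_\mathrm{alt}\setminus\Sf_n/K|$ uniformly in the parity of $\pi$; the cost is reliance on the full structure theorem Lemma~\ref{lem:inn}, hence on $n\ge 5$ and $\pi\neq e$, edge cases you rightly dispose of separately (the trivial case via the order invariant, and $n=3,4$ by a direct check, where the tabulated invariants already force $\pi$ and $\pi'$ to be conjugate). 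Since the paper's own proof of Lemma~\ref{lem:222} also quietly invokes the proof of Lemma~\ref{lem:inn}, this reliance is not a real disadvantage.
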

Before proving this proposition, we prepare some materials. 

In general, let us consider a generalized Alexander quandle $Q=Q(G,\psi)$, where $G$ is a finite group and $\psi \in \auto(G)$. 
Given $h \in G$, let $L_h$ denote the map $L_h:G \rightarrow G$ defined by $L_h(g)=hg$ for $g \in Q$. 
Then we see that $L_h \in \auto(Q)$. In fact, $L_h$ is a bijection on $Q$ since $Q=G$ as a set, and we also have 
$$
L_h \circ s_x(y)=hx\psi(x^{-1}y)=hx\psi(x^{-1}h^{-1}hy)=s_{hx}(hy)=s_{L_h(x)} \circ L_h(y), 
$$
i.e., $L_h$ is a quandle homomorphism. Note that $L_h^{-1}=L_{h^{-1}}$. 

Let $\psi' \in \auto(G)$ and assume that $Q(G,\psi) \cong_Q Q(G,\psi')$. 
Then, without loss of generality, we may assume that there is a quandle automorphism $f : Q(G,\psi) \rightarrow Q(G,\psi')$ satisfying $f(e)=e$. 
In fact, $f':=L_{f(e)}^{-1} \circ f$ is a quandle automorphism between $Q(G,\psi)$ and $Q(G,\psi')$ with $f'(e)=e$. 

We prepare the following lemmas for the proof of Proposition~\ref{prop:double_coset}. 
\begin{lem}\label{lem:111}
Work with the same notation as above. Let $K=\fix(\psi,G)$. Take any subgroup $H$ of $G$. Then, for $x_1,x_2 \in G$, we have 
$$H \cdot x_1 \cdot K=H \cdot x_2 \cdot K \;\Longleftrightarrow\; s_{x_2}=s_{hx_1} \text{ for some }h \in H.$$
\end{lem}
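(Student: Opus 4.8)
The plan is to prove this equivalence by direct computation, using the fact that $s_{x}$ is determined by the coset $[x]=xK$ (this is exactly the one-to-one correspondence between $\{s_x : x \in Q\}$ and $G/\fix(\psi,G)$ established earlier, applied with $\psi$ in place of the symmetric-group case). Since $K=\fix(\psi,G)$, for any $x$ and any $k\in K$ we have $s_{xk}=s_x$, because $s_{xk}(y)=xk\psi(k^{-1}x^{-1}y)=xk\,k^{-1}\psi(x^{-1}y)=x\psi(x^{-1}y)=s_x(y)$, where we use $\psi(k^{-1})=k^{-1}$. So right-multiplication of the base point by $K$ does not change $s_x$; this is the ``$K$'' half of the double coset. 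The ``$H$'' half is handled by left multiplication: I would relate $s_{hx}$ to $s_x$ via the left-translation quandle automorphism $L_h$ introduced just before the lemma, noting $L_h\circ s_x = s_{L_h(x)}\circ L_h = s_{hx}\circ L_h$.

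Here are the key steps in order. First, I would record the two reduction facts: (i) $s_{xk}=s_x$ for all $k\in K$, proved by the one-line computation above; and (ii) the injectivity statement ``$s_a=s_b \iff aK=bK$'', i.e. $a^{-1}b\in K$, which follows from the earlier proposition on $\{s_x\}\leftrightarrow G/\fix(\psi,G)$ (the proof there works verbatim for a general $\psi$). Second, for the direction ($\Rightarrow$): if $Hx_1K=Hx_2K$, write $x_2 = h x_1 k$ with $h\in H$, $k\in K$; then by (i), $s_{x_2}=s_{x_2 k^{-1} \cdot k}$—more carefully, $s_{x_2}=s_{(hx_1k)}=s_{hx_1}$ since $hx_1k = (hx_1)k$ and $k\in K$, so $s_{x_2}=s_{hx_1}$ with $h\in H$, as required. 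Third, for the direction ($\Leftarrow$): if $s_{x_2}=s_{hx_1}$ for some $h\in H$, then by (ii) we get $x_2^{-1}hx_1 \in K$, i.e. $x_2\in h x_1 K \subseteq H x_1 K$, so $Hx_2K=Hx_1K$ (using that $Hx_2K$ and $Hx_1K$ are double cosets, hence either equal or disjoint, and they share the element $x_2$).

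I do not expect any serious obstacle here: the lemma is essentially a bookkeeping statement translating the double-coset relation into an equality of point symmetries, and every ingredient (the behavior of $s_x$ under $K$ on the right, under $H$ via $L_h$ on the left, and the coset-injectivity of $x\mapsto s_x$) is either a one-line check or already available in the excerpt. The only mild subtlety is making sure that the general $\psi$ version of the injectivity statement is invoked correctly rather than the centralizer-specific phrasing; but since $\fix((\cdot)^\pi,\Sf_n)=C_{\Sf_n}(\pi)$, and the earlier proof of $\{s_x\}\leftrightarrow G/\fix(\psi,G)$ never used anything special about inner automorphisms, this transfers immediately. After this lemma, the intended use (in the proof of Proposition~\ref{prop:double_coset}) will be to transport the $f(e)=e$ quandle automorphism $f:Q(G,\psi)\to Q(G,\psi')$ into a bijection between $K_{\mathrm{alt}}\setminus\Sf_n/K$ and $K_{\mathrm{alt}}'\setminus\Sf_n/K'$ by tracking how $f$ acts on the point symmetries, so I would phrase the lemma with exactly that downstream application in mind.
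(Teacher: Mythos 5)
Your proof is correct and follows essentially the same route as the paper: the forward direction is the same one-line computation showing $s_{(hx_1)k}=s_{hx_1}$ using $\psi(k^{-1})=k^{-1}$, and the converse rests on $s_a=s_b\iff a^{-1}b\in\fix(\psi,G)$, which the paper simply re-derives inline instead of citing the earlier correspondence $\{s_x\}\leftrightarrow G/\fix(\psi,G)$ as you do. This repackaging (and your optional remark about $L_h$) changes nothing of substance, so the argument matches the paper's proof.
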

\begin{proof}
($\Rightarrow$): Suppose that $H \cdot x_1 \cdot K=H \cdot x_2 \cdot K$. 
Then there exist $h \in H$ and $k \in K$ such that $x_2=hx_1k$. Thus, for any $y \in Q(G,\psi)$, we see that 
\begin{align*}
s_{x_2}(y)&=x_2\psi( x_2^{-1}y )=hx_1k \psi( (hx_1k)^{-1}y )=hx_1 k \psi( k^{-1} (hx_1)^{-1} y ) = (hx_1) \psi ((hx_1)^{-1} y ) \\
&= s_{hx_1}(y). 
\end{align*}
($\Leftarrow$): Suppose that $s_{x_2}=s_{hx_1}$. Then for any $y \in Q(G,\psi)$, we have 
$$s_{x_2}(y)=x_{hx_1}(y) \;\Longleftrightarrow\; 
x_2 \psi (x_2^{-1} y )=hx_1 \psi ((hx_1)^{-1}y ) \;\Longleftrightarrow\; (hx_1)^{-1}x_2 = \psi ((hx_1)^{-1}x_2).$$
Hence, $x_1^{-1}h^{-1}x_2 \in \fix(\psi,G)=K$. Namely, there exists $k \in K$ such that $x_1^{-1}h^{-1}x_2=k$, i.e., $x_2 = h x_1 k$, as required. 
\end{proof}

\begin{lem}\label{lem:222}
Let $f:Q(\pi) \rightarrow Q(\pi')$ be a quandle automorphism with $f(e)=e$. 
For each $h \in K_\mathrm{alt}$, there exists $h' \in K_\mathrm{alt}'$ such that $f \circ L_h \circ f^{-1}=L_{h'}$. 
\end{lem}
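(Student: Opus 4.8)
The plan is to prove that the conjugate $\phi := f\circ L_h\circ f^{-1}$, which a priori is only some element of $\auto(Q(\pi'))$, is in fact a left translation $L_{h'}$ with $h'\in K_{\mathrm{alt}}'$; note at the outset that, since $f(e)=e$, we have $\phi(e)=f(h)$, so the only possible value is $h'=f(h)$. First I would record that $L_h\in\inn(Q(\pi))$: since $h\in K_{\mathrm{alt}}\subseteq\Af_n$, the isomorphism $\Phi\colon\Af_n\rtimes_\varphi C_m\to\inn(Q(\pi))$ of Lemma~\ref{lem:inn} carries $(h,\overline 0)$ to the map $y\mapsto hy=L_h(y)$. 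By Proposition~\ref{meidai1}, conjugation by $f$ restricts to a group isomorphism $\inn(Q(\pi))\to\inn(Q(\pi'))$ (it agrees on the generators $s_x$ with the map $s_x\mapsto s'_{f(x)}$), so $\phi\in\inn(Q(\pi'))$.

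The heart of the argument will be to upgrade ``$\phi\in\inn(Q(\pi'))$'' to ``$\phi$ is a left translation''. For this I would identify, inside $\inn(Q(\pi))\cong_G\Af_n\rtimes_\varphi C_m$, the set $\{L_g:g\in\Af_n\}$ — which under $\Phi$ is exactly the subgroup $\Af_n\times\{\overline 0\}$ — with the commutator subgroup of $\inn(Q(\pi))$. Indeed, since $n\ge 5$ the group $\Af_n$ is perfect, so $\Af_n\times\{\overline 0\}=[\Af_n,\Af_n]\times\{\overline 0\}$ is contained in the commutator subgroup, while the quotient $(\Af_n\rtimes_\varphi C_m)/(\Af_n\times\{\overline 0\})\cong_G C_m$ is abelian, whence the commutator subgroup is contained in $\Af_n\times\{\overline 0\}$. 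The identical computation, applied to the semidirect-product description of $\inn(Q(\pi'))$ from Lemma~\ref{lem:inn}, identifies $\{L_{g'}:g'\in\Af_n\}$ with the commutator subgroup of $\inn(Q(\pi'))$. Since a group isomorphism maps commutator subgroup onto commutator subgroup, the isomorphism of the previous paragraph carries $\{L_g:g\in\Af_n\}$ onto $\{L_{g'}:g'\in\Af_n\}$; as $L_h$ lies in the former, we conclude $\phi=L_{h'}$ for some $h'\in\Af_n$ (and necessarily $h'=f(h)$).

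It will then remain to check that $h'\in C_{\Sf_n}(\pi')$. Evaluating the quandle-homomorphism relation $f\circ s_x=s'_{f(x)}\circ f$ at $x=e$, and using $f(e)=e$ together with $s_e=(\cdot)^\pi$ on $Q(\pi)$ and $s'_e=(\cdot)^{\pi'}$ on $Q(\pi')$, yields the intertwining $f\circ(\cdot)^\pi=(\cdot)^{\pi'}\circ f$. On the other hand, $h\in K_{\mathrm{alt}}\subseteq C_{\Sf_n}(\pi)$ forces $L_h$ to commute with $(\cdot)^\pi$. Conjugating that commutation relation by $f$ and invoking the intertwining shows that $L_{h'}$ commutes with $(\cdot)^{\pi'}$, which unwinds at once to $\pi'h'=h'\pi'$. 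Combined with $h'\in\Af_n$ this gives $h'\in C_{\Sf_n}(\pi')\cap\Af_n=K_{\mathrm{alt}}'$, finishing the proof.

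The one genuinely nontrivial step is the commutator-subgroup identification in the second paragraph: recognizing that the left translations by even permutations form a characteristic subgroup of $\inn(Q(\pi))$ is exactly what forbids $f$ from sending $L_h$ to a map of the form $y\mapsto g'\,(\pi')^{\ell}\,y\,(\pi')^{-\ell}$ with $\ell\not\equiv 0$. The other two steps are straightforward manipulations of the defining formulas.
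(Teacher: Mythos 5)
Your proof is correct, and it differs from the paper's at the decisive step. The paper writes $L_h$ explicitly as a word $s_{x_1}\circ\cdots\circ s_{x_\ell}$ (via the proof of Lemma~\ref{lem:inn}), transports it through $f$ to $s'_{f(x_1)}\circ\cdots\circ s'_{f(x_\ell)}=\pi'^{f(x_1)}\cdots\pi'^{f(x_\ell)}(\cdot)$, and then shows $h'=\pi'^{f(x_1)}\cdots\pi'^{f(x_\ell)}\in\Af_n$ by a parity count on the number $\ell$ of factors (when $\pi'$ is odd, $\ord(\pi')$ and hence $\ell$ is even). You instead identify $\{L_g : g\in\Af_n\}$ with the commutator subgroup of $\inn(Q(\pi))\cong_G\Af_n\rtimes_\varphi C_m$ --- contained in $\Af_n\times\{\overline{0}\}$ because the quotient is the abelian group $C_m$, and containing it because $\Af_n$ is perfect --- so that it is a characteristic subgroup, automatically carried onto $\{L_{g'}:g'\in\Af_n\}$ by the isomorphism $\inn(Q(\pi))\to\inn(Q(\pi'))$ induced by $f$. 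This is a genuinely different justification of ``the conjugate of a left translation is again a left translation by an even permutation'': it replaces the word bookkeeping and parity argument (and the implicit use of $\ord(\pi)=\ord(\pi')$ to absorb the trailing $\pi'^{-\ell}$) by a structural, basis-free statement, at the cost of invoking perfectness of $\Af_n$; like the paper's appeal to Lemma~\ref{lem:inn}, this implicitly requires $n\ge 5$, which is harmless since the lemma is only used there and in strategy (iv) for large $n$, but it is worth stating the hypothesis. Your final step, deducing $\pi'h'=h'\pi'$ from conjugating the relation $s_e\circ L_h=L_h\circ s_e$ by $f$, is essentially the paper's third step.
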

\begin{proof}
\noindent
{\bf (The first step)}: Since $h \in K_\mathrm{alt} = K \cap \Af_n$, we have $h \in \Af_n$. 
Then there exist $x_1,\ldots,x_\ell \in \Sf_n$ such that $L_h=s_{x_1} \circ \cdots \circ s_{x_\ell}=\pi^{x_1} \cdots \pi^{x_\ell}( \cdot )$. 
(See the proof of Lemma~\ref{lem:inn}.) Thus, we see that 
\begin{align*}
f \circ L_h \circ f^{-1} &= f \circ s_{x_1} \circ \cdots \circ s_{x_\ell} \circ f^{-1} 
=s_{f(x_1)}' \circ \cdots \circ s_{f(x_\ell)}' =\pi'^{f(x_1)} \cdots \pi'^{f(x_\ell)}( \cdot ). 
\end{align*}
Let 
\begin{equation}
    \label{eq:hhh}
    h'=\pi'^{f(x_1)} \cdots \pi'^{f(x_\ell)} \in \Sf_n.
\end{equation}
Then we have $f \circ L_h \circ f^{-1}=L_{h'}$. Hence, we may prove that $h' \in K_\mathrm{alt}'$.

\noindent
{\bf (The second step)}: If $\pi'$ is an even permutation, it is clear that $h' \in \Af_n$ by \eqref{eq:hhh}. 
Even if $\pi'$ is odd, then $\ord(\pi')$ is even. Thus, we have that $\ell$ should be even, so $h' \in \Af_n$ holds. 

\noindent
{\bf (The third step)}: Our remaining task is to show that $h' \in K'$, i.e., $\psi'(h')=h'$. 
By the first step, we have $f \circ L_h \circ f^{-1}=L_{h'}$. Moreover, we also have that 
$s_e \circ L_h(x)=\psi(hx)=\psi(h)\psi(x)=L_{\psi(h)} \circ s_e(x)$ 
for any $x \in \Sf_n$, i.e., $s_e \circ L_h = L_{\psi(h)} \circ s_e$. Thus, 
\begin{align*}
s_e' \circ L_{h'} &= s_e' \circ f \circ L_h \circ f^{-1} = f \circ s_{f^{-1}(e)} \circ L_h \circ f^{-1} 
=f \circ s_e \circ L_h \circ f^{-1} = f \circ L_{\psi(h)} \circ s_e \circ f^{-1} \\
&=f \circ L_h \circ f^{-1} \circ s_{f(e)}' = L_{h'} \circ s_e'. 
\end{align*}
Hence, $\psi'(h')=s_e' \circ L_{h'}(e)=L_{h'} \circ s_e'(e)=h'$, as required. 
\end{proof}

\begin{proof}[Proof of Proposition~\ref{prop:double_coset}]
Let $f:Q(\pi) \rightarrow Q(\pi')$ be a quandle automorphism. Without loss of generality, we may assume that $f(e)=e$. 
For the statement, it is enough to show that for $x_1,x_2 \in \Sf_n$, we have 
$$K_\mathrm{alt} \cdot x_1 \cdot K=K_\mathrm{alt} \cdot x_2 \cdot K \; \Longleftrightarrow \; 
K_\mathrm{alt}' \cdot f(x_1) \cdot K'=K_\mathrm{alt}' \cdot f(x_2) \cdot K'.$$ 

First, we have 
$$K_\mathrm{alt}\cdot x_1 \cdot K = K_\mathrm{alt} \cdot x_2 \cdot K \;\Longleftrightarrow\; s_{x_2} = s_{hx_1} \text{ for some }h \in K_\mathrm{alt}$$
by Lemma~\ref{lem:111}. Moreover, we have 
\begin{align*}
s_{x_2} = s_{hx_1} \;&\Longleftrightarrow\; f \circ s_{x_2} \circ f^{-1} = f \circ s_{hx_1} \circ f^{-1} \;\Longleftrightarrow\; 
s_{f(x_2)}'=f \circ L_h \circ s_{x_1} \circ L_h^{-1} \circ f^{-1} \\
&\Longleftrightarrow\; s_{f(x_2)}'=(f \circ L_h \circ f^{-1}) \circ s_{f(x_1)}' \circ (f \circ L_h \circ f^{-1})^{-1}. 
\end{align*}
Now, by Lemma~\ref{lem:222}, we see that $f \circ L_h \circ f^{-1}=L_{h'}$ for some $h' \in K_\mathrm{alt}'$. Hence, 
\begin{align*}
s_{x_2} = s_{hx_1} \;&\Longleftrightarrow\; s_{f(x_2)}'=L_{h'} \circ s_{f(x_1)}' \circ L_{h'}^{-1} \;\Longleftrightarrow\; s_{f(x_2)}'=s_{h'f(x_1)}' \\
&\Longleftrightarrow \; K_\mathrm{alt}' \cdot f(x_1) \cdot K'=K_\mathrm{alt}' \cdot f(x_2) \cdot K' \;\;\text{by Lemma~\ref{lem:111}}, 
\end{align*}
as required. 
\end{proof}

\subsection{Determining the structure of $\Qc(\Sf_n)$}

We are now ready to discuss Problem \ref{toi} for the symmetric group $\Sf_n$ with $n \geq 3$. 
In order to distinguish $Q(\Sf_n,\psi)$'s for $\psi \in \auto(\Sf_n)$, we employ the following strategy: 
\begin{itemize}
\item[(i)] Compute $\ord(\psi)$ and $|\fix(\psi,\Sf_n)|$ and check if one of those two invariants is different. 
(Recall that $\ord(\psi)=\ord(\pi)$ and $\fix(\psi,\Sf_n)=C_{\Sf_n}(\pi)$ if $\psi=(\cdot)^\pi$.) 
\item[(ii)] When $(\ord(\psi), |\fix(\psi,\Sf_n)|)=(\ord(\psi'), |\fix(\psi',\Sf_n)|)$ for some $\psi,\psi' \in \auto(\Sf_n)$ which are not conjugate, 
compare $\inn(Q(\Sf_n,\psi))$ and $\inn(Q(\Sf_n,\psi'))$. 
If $\psi=(\cdot)^\pi$ and $\psi'=(\cdot)^{\pi'}$, then it is enough to compare their parities of $\pi$ and $\pi'$ by Proposition~\ref{prop:innQ}. 
\item[(iii)] When those also coincide, compute $\psi^i$ and $\psi'^i$ for some positive integer $i$, 
and compare $Q(\Sf_n,\psi^i)$ and $Q(\Sf_n,\pi'^i)$ in the ways (i) and (ii) above. 
\item[(iv)] When those still agree and $\psi=(\cdot)^\pi$ and $\psi'=(\cdot)^{\pi'}$, apply Proposition~\ref{prop:double_coset}. 
Namely, compute $K=C_{\Sf_n}(\pi), K_\mathrm{alt}=K \cap \Af_n$, $K'=C_{\Sf_n}(\pi')$ and $K'_\mathrm{alt}=K' \cap \Af_n$, 
and compare $|K_\mathrm{alt} \setminus \Sf_n / K|$ and $|K_\mathrm{alt}' \setminus \Sf_n / K'|$. 
\end{itemize}

The following tables show that the different conjugacy classes of $\Sf_n$ determine the different generalized Alexander quandles. 

\bigskip

\noindent
\underline{$n=3,4,5$}

For the cases $n=3,4,5$, we can distinguish $Q(\pi)$ by considering $\ord(\pi)$ and $|C_{\Sf_n}(\pi)|$. 
See Tables~\ref{tab:S_3}, \ref{tab:S_4} and \ref{tab:S_5}. Hence, the strategy (i) is enough. 

\begin{table}[h]
\begin{minipage}[t]{.48\textwidth}
\centering
\small
\begin{tabular}{r|ccc}
Shape of $\pi$        & $(3)$ &$(2,1)$ &$(1,1,1)$ \\
\toprule
$\ord(\pi)$           & $3$   &$2$     &$1$ \\
$|C_{\Sf_3}(\pi)|$        & $3$   &$2$     &$6$ \\
\bottomrule
\end{tabular}

\bigskip

\caption{Conjugacy classes for $\Sf_3$ and the invariants}
\label{tab:S_3}
\end{minipage}
\begin{minipage}[t]{.5\textwidth}
\centering
\small
\begin{tabular}{r|ccccc}
Shape of $\pi$            & $(4)$ &$(3,1)$ &$(2,2)$ &$(2,1,1)$ &$(1,1,1,1)$ \\
\toprule
$\ord(\pi)$               & $4$   &$3$     &$2$     &$2$       &$1$ \\
$|C_{\Sf_4}(\pi)|$            & $4$   &$3$     &$8$     &$4$       &$24$ \\
\bottomrule
\end{tabular}

\bigskip

\caption{Conjugacy classes for $\Sf_4$ and the invariants}
\label{tab:S_4}
\end{minipage}

\bigskip

\centering
\small
\begin{tabular}{r|ccccccc}
Shape of $\pi$            & $(5)$ &$(4,1)$ &$(3,2)$ &$(3,1,1)$ &$(2,2,1)$ &$(2,1,1,1)$ &$(1,1,1,1,1)$ \\
\toprule
$\ord(\pi)$               & $5$   &$4$     &$6$     &$3$       &$2$       &$2$         &$1$ \\
$|C_{\Sf_5}(\pi)|$            & $5$   &$4$     &$6$     &$6$       &$8$       &$12$        &$120$ \\
\bottomrule
\end{tabular}

\bigskip

\caption{Conjugacy classes for $\Sf_5$ and the invariants}
\label{tab:S_5}
\end{table}

\bigskip

\noindent
\underline{$n=6$}

As we see in Tables~\ref{tab:S_6_inn} and \ref{tab:S_6_out} below, by using the strategies (i) and (ii), 
we can distinguish $Q(\Sf_6,\psi)$'s for $\psi \in \auto(\Sf_6)$ 
except for the cases of the pair $\Oc_{(4,2)}^{\mathrm{E}}$ and $\Oc_{(4,2)}^{\mathrm{O}}$, 
but we can distinguish these quandles by Proposition~\ref{prop:Q_1vsQ_2}. 

\begin{table}[H]
    \centering
    \small
    \begin{tabular}{r|cccccccc}
        Conjugacy classes   & $\Ic_{(6)}\cup\Ic_{(3,2,1)}$ & $\Ic_{(5,1)}$ & $\Ic_{(4,2)}$ & $\Ic_{(4,1,1)}$ & $\Ic_{(3,3)}\cup \Ic_{(3,1^3)}$ & $\Ic_{(2^3)}\cup \Ic_{(2,1^4)}$ & $\Ic_{(2,2,1,1)}$ & $\Ic_{(1^6)}$ \\
        \toprule
        $\ord(\pi)$         & $6$                          & $5$           & $4$           & $4$             & $3$                             & $2$                             & $2$               & $1$           \\
        $|\fix((\cdot)^\pi, \Sf_6)|$  & $6$                          & $5$           & $8$           & $8$             & $18$                            & $48$                            & $16$              & $6!$          \\
        Parity of $\pi$     &                              &               & even          & odd             &                                 &                                 &                   &        \\
        \bottomrule
    \end{tabular}

\bigskip

\caption{Conjugacy classes for $\inn(\Sf_6)$ and the invariants}
\label{tab:S_6_inn}
\end{table}
\begin{table}[H]
    \begin{tabular}{r|ccccc}
        Conjugacy classes    & $\Oc_{(5,1)}$ & $\Oc_{(4,2)}^{\mathrm{E}}$ & $\Oc_{(4,2)}^{\mathrm{O}}$ & $\Oc_{(2^2,1^2)}$ & $\Oc_{(1^6)}$ \\
        \toprule
        $\ord(\psi)$         & $10$          & $8$                        & $8$                        & $4$               & $2$           \\
        $|\fix(\psi,\Sf_6)|$ & $5$           & $4$                        & $4$                        & $4$               & $20$          \\
        \bottomrule
    \end{tabular}

\bigskip

\caption{Conjugacy classes for $\auto(\Sf_6) \setminus \inn(\Sf_6)$ and the invariants}
\label{tab:S_6_out}
\end{table}

\bigskip

\noindent
\underline{$n=7$}

According to Table~\ref{tab:S_7}, the strategies (i) and (ii) are enough. 

\begin{table}[H]
\centering
\small
\begin{tabular}{r|cccccccc}
Shape of $\pi$         & $(7)$ &$(6,1)$ &$(5,2)$ &$(4,3)$ &$(5,1^2)$ &$(4,2,1)$ &$(3^2,1)$ &$(3,2^2)$ \\ 
\toprule
$\ord(\pi)$            & $7$   &$6$     &$10$    &$12$    &$5$       &$4$       &$3$       &$6$       \\
$|C_{\Sf_7}(\pi)|$        & $7$   &$6$     &$10$    &$12$    &$10$      &$8$       &$18$      &$24$      \\
\bottomrule
\end{tabular}

\vspace{0.2cm}

\begin{tabular}{r|cccccccc}
Shape of $\pi$   &$(4,1^3)$ &$(3,2,1^2)$ &$(2^3,1)$ &$(2^2,1^3)$ &$(3,1^4)$ &$(2,1^5)$ &$(1^7)$ \\
\toprule
$\ord(\pi)$      &$4$       &$6$         &$2$       &$2$     &$3$           &$2$       &$1$ \\
$|C_{\Sf_7}(\pi)|$   &$24$      &$12$        &$48$      &$48$    &$72$       &$240$     &$7!$ \\
Parity               &          &            &odd       &even    &           &          &     \\
\bottomrule
\end{tabular}

\bigskip

\caption{Conjugacy classes for $\Sf_7$ and the invariants}\label{tab:S_7}
\end{table}

\bigskip

\noindent
\underline{$n=8$}

We see that we can distinguish $Q(\pi)$'s except for $(3^2,2)$ and $(3,2,1^3)$ (see Table~\ref{tab:S_8}). 
Now, we apply the strategy (iii) for those quandles. Let $\pi_1 \in \Sf_8$ (resp. $\pi_2 \in \Sf_8$) be of the shape $(3^2,2)$ (resp. $(3,2,1^3)$). 
Consider $Q_1:=Q(\pi_1)^{(2)}=Q(\pi_1^2)$ and $Q_2:=Q(\pi_2)^{(2)}=Q(\pi_2^2)$ (see Proposition~\ref{prop:i_bai} (b)). 
Since $\pi_1^2$ (resp. $\pi_2^2$) is of the shape $(3^2,1^2)$ (resp. $(3,1^5)$), we can see that $Q_1 \not\cong_Q Q_2$. 
Thus, we conclude that $Q(\pi_1) \not\cong Q(\pi_2)$ by Proposition~\ref{prop:i_bai} (a). 

Hence, we see that the quandles arising from different conjugacy classes are all different. 

\begin{table}[H]
\centering
\small
\begin{tabular}{r|ccccccccccccc}
Shape of $\pi$       & $(8)$ &$(7,1)$ &$(6,2)$ &$(6,1^2)$ &$(5,3)$ &$(4^2)$ &$(4,2^2)$ &$(5,2,1)$ \\ 
\toprule
$\ord(\pi)$      & $8$   &$7$     &$6$     &$6$       &$15$    &$4$     &$4$       &$10$      \\
$|C_{\Sf_8}(\pi)|$   & $8$   &$7$     &$12$    &$12$      &$15$    &$32$    &$32$      &$36$      \\
Parity               &       &        &even    &odd       &        &even    &odd       &          \\
\bottomrule
\end{tabular}
\end{table}
\begin{table}[H]
\begin{tabular}{r|ccccccccccc}
Shape of $\pi$      &$(4,3,1)$ &$(3^2,2)$ &$(3,2,1^3)$ &$(2^4)$ & $(5,1^3)$ &$(4,2,1^2)$ &$(3^2,1^2)$ \\
\toprule																
$\ord(\pi)$     &$12$      &$6$       &$6$         &$2$     & $5$       &$4$         &$3$         \\
$|C_{\Sf_8}(\pi)|$  &$10$      &$36$      &$36$        &$384$   & $30$      &$16$        &$36$        \\
Parity              &          &odd       &odd         &        &           &            &            \\
\bottomrule
\end{tabular}
\end{table}
\begin{table}[H]
\begin{tabular}{r|ccccccccccc}
Shape of $\pi$     &$(3,2^2,1)$ &$(4,1^4)$ &$(2^3,1^2)$ &$(3,1^5)$ &$(2^2,1^4)$ &$(2,1^6)$ &$(1^8)$ \\
\toprule																	
$\ord(\pi)$        &$6$         &$4$       &$2$         &$3$       &$2$         &$2$       &$1$ \\
$|C_{\Sf_8}(\pi)|$ &$24$        &$96$      &$24$        &$360$     &$96$        &$1440$    &$8!$ \\
\bottomrule
\end{tabular}

\bigskip

\caption{Conjugacy classes for $\Sf_8$ and the invariants}\label{tab:S_8}
\end{table}

\bigskip

\noindent
\underline{$n=9,11,12,16,19,20,23,28$}

In these cases, similar to the case $n=8$, we can distinguish $Q(\pi)$'s by the strategies (i), (ii) and (iii). 

\bigskip

\noindent
\underline{$n=10$}

In this case, we can distinguish $Q(\pi)$'s by the strategies (i), (ii) and (iii) except for $(4,2^3)$ and $(4,2,1^4)$. 
Let $\pi_1 \in \Sf_{10}$ (resp. $\pi_2 \in \Sf_{10}$) be of the shape $(4,2^3)$ (resp. $(4,2,1^4)$). 
In fact, we see the invariants of $Q(\pi_1)$ and $Q(\pi_2)$ as follows: 
\begin{align*}
\ord(\pi_1)=\ord(\pi_2)=4, \;\; |C_{\Sf_{10}}(\pi_1)|=|C_{\Sf_{10}}(\pi_2)|=192, 
\;\; \text{(parity of $\pi_1$)}=\text{(parity of $\pi_2$)}=\text{even}.
\end{align*}
Moreover, we see that $Q(\pi_1)^{(i)}=Q(\pi_1)$ (resp. $Q(\pi_2)^{(i)}=Q(\pi_2)$) for $i$ odd, 
$Q(\pi_1)^{(4k+2)}=Q(\pi_2)^{(4k+2)}=Q(\pi')$, where $\pi'$ is of the form $(2^2,1^6)$, 
and $Q(\pi_1)^{(4k)}=Q(\pi_2)^{(4k)}$ is a trivial quandle. Hence, we cannot distinguish them by (iii). 

Now, we apply the strategy (iv) for those quandles. Then we can compute 
\begin{align*}
&|C_{\Sf_{10}}(\pi_1) \cap \Af_{10} \setminus \Sf_{10} /C_{\Sf_{10}}(\pi_1)|=240; \text{ and }\\
&|C_{\Sf_{10}}(\pi_2) \cap \Af_{10} \setminus \Sf_{10} /C_{\Sf_{10}}(\pi_2)|=291. 
\end{align*}
Note that we calculate these numbers by using {\tt GAP}. See \cite{GAP}. 

Therefore, we conclude that $Q(\pi_1) \not\cong_Q Q(\pi_2)$. 

\bigskip

\noindent
\underline{$n=13,14,17,18,21,22,24,25,26,27,29,30$}

In these cases, similar to the case $n=10$, we can distinguish $Q(\pi)$'s by the strategies (i), (ii), (iii) and (iv).


\bigskip

Consequently, we obtain the following. 
\begin{thm}\label{thm:sym}
We have a one-to-one correspondence between $\Qc(\Sf_n)$ and the conjugacy classes of $\auto(\Sf_n)$ for any $n \in \{3,4,\ldots,30\} \setminus \{15\}$. 
In particular, we have a one-to-one correspondence between $\Qc(\Sf_n)$ and the conjugacy classes of $\Sf_n$ for $n \in \{3,4,\ldots,30\}\setminus \{6,15\}$. 
\end{thm}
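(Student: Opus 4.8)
The plan is to reduce Theorem~\ref{thm:sym} to a finite, essentially mechanical verification, exactly as the tables and case analysis in this section already suggest. The statement to prove has two directions. The ``$\Leftarrow$'' direction (conjugate automorphisms give isomorphic quandles) is already supplied by Theorem~\ref{thm:main}(a), and since $\auto(\Sf_n)=\inn(\Sf_n)$ for $n\neq 2,6$, this means: for $n\notin\{6,15\}$, if $\pi^{\Sf_n}=\pi'^{\Sf_n}$ then $Q(\pi)\cong_Q Q(\pi')$. So the real content is the ``$\Rightarrow$'' direction: if $Q(\pi)\cong_Q Q(\pi')$ then $\pi$ and $\pi'$ are conjugate in $\Sf_n$ (equivalently, have the same shape); and for $n=6$, the analogous statement for all of $\auto(\Sf_6)$ using the thirteen conjugacy classes listed in Subsection~\ref{sec:S_6}. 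First I would fix $n$ and enumerate the partitions of $n$, i.e.\ the conjugacy classes of $\Sf_n$, and for each one record the tuple of invariants furnished by Theorem~\ref{thm:main}(b): the order $\ord_{\Sf_n}(\pi)=\mathrm{lcm}$ of the parts, the size $|C_{\Sf_n}(\pi)|=\prod i^{a_i}a_i!$, the parity of $\pi$ (which by Proposition~\ref{prop:innQ} is detected by $\inn(Q(\pi))$), and, when needed, the same data for the powers $\pi^i$ via Proposition~\ref{prop:i_bai}(b), and finally the double-coset count $|K_{\mathrm{alt}}\setminus\Sf_n/K|$ of Proposition~\ref{prop:double_coset}. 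Two conjugacy classes that differ in any of these invariants yield non-isomorphic quandles.

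The argument then runs as a finite case split in $n$ from $3$ to $30$, following the strategy (i)--(iv) spelled out in this subsection. For small $n$ ($n=3,4,5$) the pair $(\ord(\pi),|C_{\Sf_n}(\pi)|)$ already separates all classes (Tables~\ref{tab:S_3}--\ref{tab:S_5}); for $n=7$ one also needs the parity invariant to separate the two classes with the same order and centralizer size (Table~\ref{tab:S_7}); for $n=8,9,11,12,16,19,20,23,28$ one additionally passes to a suitable power $\pi\mapsto\pi^i$ and reapplies the order/centralizer/parity invariants to the shape of $\pi^i$ (this handles, e.g., the pair $(3^2,2)$ vs.\ $(3,2,1^3)$ in $\Sf_8$ by squaring); and for $n=10,13,14,17,18,21,22,24,25,26,27,29,30$ a handful of stubborn pairs survive all of these and are separated by the double-coset invariant of Proposition~\ref{prop:double_coset}, computed with \texttt{GAP} (e.g.\ $(4,2^3)$ vs.\ $(4,2,1^4)$ in $\Sf_{10}$, giving $240\neq 291$). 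Since in each case every pair of distinct conjugacy classes of $\Sf_n$ is separated by at least one invariant, and conjugate classes always give isomorphic quandles, the map from conjugacy classes of $\Sf_n$ to $\Qc(\Sf_n)$ is a bijection for $n\in\{3,\dots,30\}\setminus\{6,15\}$, which is the second assertion. For $n=6$ one repeats the bookkeeping with the thirteen conjugacy classes of $\auto(\Sf_6)$ from Subsection~\ref{sec:S_6}: Tables~\ref{tab:S_6_inn} and~\ref{tab:S_6_out} show that order, fixed-subgroup size, and parity separate all of them except the pair $\Oc_{(4,2)}^{\mathrm{E}},\Oc_{(4,2)}^{\mathrm{O}}$, which is handled by Proposition~\ref{prop:Q_1vsQ_2} (the centralizer-order-$40$ criterion inside $\Af_6\rtimes_{\varphi_k}C_8$). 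Together with Theorem~\ref{thm:main}(a) this gives the first assertion for $n=6$, and hence for all $n\in\{3,\dots,30\}\setminus\{15\}$.

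The key steps, in order, are: (1) invoke Theorem~\ref{thm:main}(a) and Proposition~\ref{prop:auto_S_n} to reduce to showing distinct conjugacy classes give distinct quandles; (2) tabulate $(\ord,|C|,\text{parity})$ for all partitions of each $n$ and knock out every pair these three distinguish; (3) for the residual pairs, pass to powers via Proposition~\ref{prop:i_bai} and retabulate; (4) for the few pairs still surviving, compute the double-coset number $|K_{\mathrm{alt}}\setminus\Sf_n/K|$ (via \texttt{GAP}) and verify it differs; (5) handle $n=6$ separately with the thirteen classes of $\auto(\Sf_6)$ and Proposition~\ref{prop:Q_1vsQ_2} for the one remaining outer pair. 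The main obstacle is step (4): identifying precisely which pairs resist all of Theorem~\ref{thm:main}'s invariants, and confirming—by the double-coset computation—that Proposition~\ref{prop:double_coset} actually separates each of them. This is where the restriction to $n\leq 30$ and the exclusion of $n=15$ enter: for $n=15$ there is presumably a pair of shapes (for instance two shapes both with lcm $15$, matching centralizer orders, matching parities, matching power-shapes, and equal double-coset counts) that none of the invariants developed here can tell apart, so the method as stated simply does not decide that case—consistent with its being left open in Question~\ref{yosou}.
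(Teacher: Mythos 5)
Your proposal is correct and follows essentially the same route as the paper: reduce via Theorem~\ref{thm:main}(a) and Proposition~\ref{prop:auto_S_n} to showing distinct conjugacy classes yield non-isomorphic quandles, then separate classes by the order/centralizer/parity invariants, powers (Proposition~\ref{prop:i_bai}), the double-coset count of Proposition~\ref{prop:double_coset} computed in \texttt{GAP}, and Proposition~\ref{prop:Q_1vsQ_2} for the outer pair in the $n=6$ case. This matches the paper's strategy (i)--(iv) and its case-by-case tabulation, including the treatment of the stubborn pairs such as $(3^2,2)$ vs.\ $(3,2,1^3)$ for $n=8$ and $(4,2^3)$ vs.\ $(4,2,1^4)$ for $n=10$, and the reason $n=15$ is excluded.
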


\noindent
\underline{$n=15$}

We encounter the problem in the case $n=15$. We can distinguish $Q(\pi)$'s except for $(9,3^2)$ and $(9,3,1^3)$. 
Let $\pi_1$ be of the shape $(9,3^2)$ and let $\pi_2$ be of the shape $(9,3,1^3)$. 
Then the quandles $Q(\pi_1)$ and $Q(\pi_2)$ have the invariants as follows: 
$$\ord(\pi_1)=\ord(\pi_2)=9, \;\; |C_{\Sf_{15}}(\pi_1)|=|C_{\Sf_{15}}(\pi_2)|=162, \;\; 
\text{(parity of $\pi_1$)}=\text{(parity of $\pi_2$)}=\text{even}. $$
We can also see that we cannot distinguish them by the strategy (iii). Moreover, we have 
$$|C_{\Sf_{15}}(\pi_1) \cap \Af_{15} \setminus \Sf_{15} /C_{\Sf_{15}}(\pi_1)|=
|C_{\Sf_{15}}(\pi_2) \cap \Af_{15} \setminus \Sf_{15} /C_{\Sf_{15}}(\pi_2)|=101,415,520. $$
Therefore, we cannot distinguish them by (iv). 

On the other hand, we can calculate that 
$$|C_{\Sf_{15}}(\pi_1) \setminus \Sf_{15} /C_{\Sf_{15}}(\pi_1)|=50,716,744 \text{ and }
|C_{\Sf_{15}}(\pi_2) \setminus \Sf_{15} /C_{\Sf_{15}}(\pi_2)|=55,008,600, $$
although we do not know if $|C_{\Sf_n}(\pi) \setminus \Sf_n / C_{\Sf_n}(\pi)|$ is an invariant of $Q(\pi)$. 


\smallskip

Finally, we conclude the present paper by suggesting the following question. 
\begin{q}\label{yosou}
For any $n \neq 2,6$, does $\Qc(\Sf_n)$ one-to-one correspond to the set of the conjugacy classes of of $\Sf_n$? 
\end{q}
Theorem~\ref{thm:sym} says that this is true when $n \in \{3,4,\ldots,30\}\setminus \{6,15\}$. 
Note that this is not true in the case $\Qc(C_n)$. See Remark~\ref{rem:cyclic}. 


\bigskip


\end{document}